\newtheorem{theorem}{Theorem}[section]
\newtheorem{lemma}{Lemma}[section]
\newtheorem{corollary}{Corollary}[section]
\newtheorem{OldTheorem}{Theorem}
\theoremstyle{definition}
\newtheorem{definition}{Definition}[section]
\theoremstyle{definition}
\theoremstyle{remark}
\newtheorem{remark}{Remark}[section]
\theoremstyle{remark}
\def\sp{{\rm span}}
\def\Log{{\rm Log}}
\def\rank{{\rm rank\,}}
\def\ZU{\ensuremath{\mathfrak U}}
\def\ZV{\ensuremath{\mathfrak V}}
\def\MM^d{\ensuremath{\mathfrak M}}
\def\MM{\ensuremath{\mathcal M}}
\def\ZB{\ensuremath{\mathcal B}}
\def\ZZ{\ensuremath{\mathbb Z}}
\def\ZI{\ensuremath{\textbf 1}}
\def\x{\ensuremath{\textbf x}}
\def\n{\ensuremath{\textbf n}}
\def\k{\ensuremath{\textbf k}}
\def\m{\ensuremath{\textbf m}}
\def\u{\ensuremath{\textbf u}}
\def\t{\ensuremath{\textbf t}}
\def\v{\ensuremath{\textbf v}}
\def\s{\ensuremath{\textbf s}}
\def\zr{\ensuremath{\textbf r}}
\def\ZN{\ensuremath{\mathbb N}}
\def\ZP{\ensuremath{\mathcal P}}
\def\ZD{\ensuremath{\cal D}}
\def\ZR{\ensuremath{\mathbb R}}
\def\ZT{\ensuremath{\mathbb T}}
\def\ZD{\ensuremath{\mathcal D}}
\numberwithin{equation}{section}
\def\md#1#2\emd{\ifx0#1
	\begin{equation*} #2 \end{equation*}\fi  
	\ifx1#1\begin{equation}#2\end{equation}\fi   
	\ifx2#1\begin{align*}#2\end{align*}\fi   
	\ifx3#1\begin{align}#2\end{align}\fi    
	\ifx4#1\begin{gather*}#2\end{gather*}\fi  
	\ifx5#1\begin{gather}#2\end{gather}\fi   
	\ifx6#1\begin{multline*}#2\end{multline*}\fi  
	\ifx7#1\begin{multline}#2\end{multline}\fi  
	\ifx8#1\begin{multline*}\begin{split}#2\end{split}\end{multline*}\fi
	\ifx9#1\begin{multline}\begin{split}#2\end{split}\end{multline}\fi
}
\newcommand {\e }[1]{\eqref{#1}}
\newcommand {\lem }[1]{Lemma \ref{#1}}
\newcommand {\cor }[1]{Corollary \ref{#1}}
\newcommand {\trm }[1]{Theorem \ref{#1}}
\begin{document}
\title[] {On the convergence of multiple ergodic means}
\author{Grigori A. Karagulyan}
\address{Faculty of Mathematics and Mechanics, Yerevan State
	University, Alex Manoogian, 1, 0025, Yerevan, Armenia} 
\email{g.karagulyan@ysu.am}

\author{Michael T. Lacey}   
\address{ School of Mathematics, Georgia Institute of Technology, Atlanta GA 30332, USA}
\email {lacey@math.gatech.edu}
\thanks{Research supported in part by grant  from the US National Science Foundation, DMS-1949206}

\author{Vahan A. Martirosyan}
\address{Faculty of Mathematics and Mechanics, Yerevan State
	University, Alex Manoogian, 1, 0025, Yerevan, Armenia} 
\email{vahanmartirosyan2000@gmail.com}
\thanks{The work was supported by the Science Committee of RA, in the frames of the research project 21AG‐1A045 }

\subjclass[2010]{37A30, 37A46, 42B25}
\keywords{Ergodic theorems, strong maximal function, multiple ergodic sums}

\begin{abstract}
	Given sequence of measure preserving transformations $\ZU=\{U_k:\,k=1,2,\ldots, n\}$ on a measurable space $(X,\mu)$. We prove a.e. convergence of the ergodic means	
	\begin{equation}\label{1}
		\frac{1}{s_1\cdots s_{n}}\sum_{j_1=0}^{s_1-1}\cdots\sum_{j_n=0}^{s_n-1}f\left(U_1^{j_1}\cdots U_n^{j_n} x	\right)
	\end{equation}
	as $\min_j s_j\to\infty $, for any function $f\in L\log^{d-1}(X)$, where $d\le n$ is the rank of the transformations $\ZU$. The result gives a generalization of a theorem by N. Dunford and A. Zygmund, claiming the convergence of \e{1} in a narrower class of functions $L\log^{n-1}(X)$.
\end{abstract}

\maketitle  

\section{Introduction}

Birkhoff’s ergodic theorem is one of the most important and beautiful result of the probability
theory. The study of ergodic theorems started in 1931 by von Neumann and Birkhoff, having its origins
in statistical mechanics. Recall the definition of the measure-preserving transformation (see \cite{EiWa}).
\begin{definition}
	Let $(X,\ZB,\mu)$ be a probability space. A mapping $T: X\to X$ is said to be a measure-preserving transformation if for any measurable set $E\in \ZB$ the set $T^{-1}(E)$ is also measurable and $\mu(E)=\mu(T^{-1}(E))$. The combination $(X,\ZB,\mu,T)$ is called a measure-preserving system.
\end{definition}

\begin{OldTheorem}[Birkhoff]
	If $(X,\ZB,\mu, T)$ is a measure-preserving system, then for any function $f\in L^1(X)$ the averages
	\begin{equation}\label{a33}
		\frac{1}{n}\sum_{j=0}^{n-1}f(T^jx)
	\end{equation}
converge almost everywhere to a $T$-invariant function $\bar f$ as $n\to\infty$. 
\end{OldTheorem}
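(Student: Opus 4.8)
The plan is to follow the classical route through a maximal inequality combined with a density argument (the Banach principle). Write $A_nf(x)=\frac1n\sum_{j=0}^{n-1}f(T^jx)$ for the $n$-th ergodic average, $S_nf=\sum_{j=0}^{n-1}f\circ T^j$ for the associated sum, and $Mf(x)=\sup_{n\ge1}|A_nf(x)|$ for the maximal operator. The argument splits into: (i) a maximal ergodic theorem, (ii) a.e.\ convergence on a dense subclass, (iii) the passage to all of $L^1$, and (iv) identification of the limit.

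First I would establish the \emph{maximal ergodic theorem}: if $f\in L^1(X)$ and $E=\{x:\sup_{n\ge1}S_nf(x)>0\}$, then $\int_E f\,d\mu\ge0$. The cleanest route is Garsia's argument. Set $M_Nf=\max_{0\le n\le N}S_nf$ (with $S_0f=0$, so $M_Nf\ge0$). Since $S_nf=f+(S_{n-1}f)\circ T\le f+(M_Nf)\circ T$ for $1\le n\le N$, taking the max over $n\ge1$ gives $M_Nf\le f+(M_Nf)\circ T$ on the set $\{M_Nf>0\}$. Integrating this over $\{M_Nf>0\}$, using that $M_Nf$ vanishes off that set and that $T$ preserves $\mu$ (so $\int(M_Nf)\circ T\,d\mu=\int M_Nf\,d\mu$), the two integrals of $M_Nf$ cancel and leave $\int_{\{M_Nf>0\}}f\,d\mu\ge0$; letting $N\to\infty$ and using dominated convergence yields the claim. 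From this, applying the maximal theorem to $f-\lambda$ on the super-level set $\{\sup_nA_nf>\lambda\}$ (and to $|f|$) produces the Wiener weak-type $(1,1)$ bound $\mu\{Mf>\lambda\}\le\lambda^{-1}\|f\|_1$.

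Next I would produce a dense subclass of $L^1(X)$ on which convergence is transparent. Working first in $L^2(X)$, which is dense in $L^1(X)$, I would use the orthogonal decomposition $L^2=\{f:Uf=f\}\oplus\overline{\mathrm{ran}(I-U)}$ for the Koopman isometry $Uf=f\circ T$. For a $T$-invariant function one has $A_nf=f$ for every $n$, while for a coboundary $f=g-g\circ T$ with $g\in L^\infty$ the sum telescopes, $A_n(g-g\circ T)=\tfrac1n(g-g\circ T^n)\to0$ uniformly. Hence invariants-plus-coboundaries form a dense set on which $A_nf$ converges a.e. The convergence for arbitrary $f\in L^1$ then follows from the Banach principle: the weak-type bound forces the set of $f$ for which $A_nf$ converges a.e.\ to be closed in $L^1$, so it contains the closure of the dense subclass, i.e.\ all of $L^1(X)$. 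Finally I would identify $\bar f$: it is $T$-invariant because $A_nf$ and $(A_nf)\circ T$ differ by $O(1/n)$ in the running term, and testing $\bar f$ against invariant sets (justified by the maximal inequality and dominated convergence) identifies it with the conditional expectation $\mathbb E[f\mid\mathcal I]$ onto the $\sigma$-algebra $\mathcal I=\{A\in\ZB:T^{-1}A=A\}$. The main obstacle is the maximal ergodic theorem of step two, where the sign-sensitive cancellation must be handled exactly; everything downstream is soft functional analysis.
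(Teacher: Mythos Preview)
Your proof is the standard, correct route to Birkhoff's theorem: Garsia's proof of the maximal ergodic inequality, the resulting weak-type $(1,1)$ bound, the von Neumann splitting $L^2=\ker(I-U)\oplus\overline{\mathrm{ran}(I-U)}$ together with $L^\infty$-coboundaries as the dense class on which convergence is explicit, and the Banach principle to pass to all of $L^1$. The identification of $\bar f$ with $\mathbb E[f\mid\mathcal I]$ is also standard. I see no gap.

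However, there is nothing to compare against: the paper does \emph{not} prove this statement. Birkhoff's theorem appears in the introduction as Theorem~A, a classical background result that is merely quoted (with a reference to \cite{EiWa}) to set the stage for the multiple ergodic averages that are the paper's actual subject. The paper's own contributions are Theorems~\ref{T1}, \ref{T2}, and \ref{T3}, and the only pointwise-convergence proof it gives (that of Theorem~\ref{T3}) \emph{uses} the Dunford--Zygmund theorem (and hence implicitly Birkhoff) as a black box for the $L^\infty$ case, then upgrades to $L\log^{d-1}L$ via the maximal inequality of Theorem~\ref{T2}. So your write-up is fine as an independent proof of a result the paper takes for granted.
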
 
There are different proofs and various generalizations of this classical theorem. Some of those clearly demonstrate strong link between the Lebesgue differentiation theory on $\ZR^n$ and  pointwise convergence of different type of ergodic averages. The following multiple version of Birkhoff's theorem, proved by Zygmund \cite{Zyg} and Dunford \cite{Dun} independently, is an example of such a resemblance. Let $\Phi:\ZR^+\to\ZR^+$ be non-decreasing function and $(X,\ZB,\mu)$ be a probability space.  Denote by $L_\Phi (X)$ the class of $\ZB$-measurable functions $f$ on $X$ with $\Phi(|f|)\in L^1(\ZT)$.  The class $L_\Phi(X)$ corresponding to a function 
\begin{equation}\label{a51}
	\Phi(t)=t\left(1+\max\{0,\log^n t\}\right),\quad n\ge 1,
\end{equation} 
will be denoted by $L\log^{n} L(X)$. Clearly this class of function is strongly included in $L^1(X)$.
\begin{OldTheorem}[Dunford-Zygmund]\label{OT1}
	Let $U_1,\ldots,U_n$ be measure-preserving one-to-one transformations of a probability space $(X,\ZB,\mu)$. Then for any function $f\in L\log^{n-1} L(X)$ the averages 
	\begin{equation}\label{a25}
		\frac{1}{s_1\cdots s_{n}}\sum_{j_1=0}^{s_1-1}\cdots\sum_{j_n=0}^{s_n-1}f\left(U_1^{j_1}\cdots U_n^{j_n} x	\right)
	\end{equation}
converge a.e. as $\min_j s_j\to\infty $.
\end{OldTheorem}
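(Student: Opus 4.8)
The plan is to argue by induction on the number $n$ of transformations, writing $A^{(n)}_{s_1,\dots,s_n}f$ for the average in \e{a25}. The base case $n=1$ is Birkhoff's theorem. For the inductive step I would first factor $A^{(n)}$ as a one-parameter average composed with an $(n-1)$-parameter one: with $M_U^sf(x)=\frac1s\sum_{j=0}^{s-1}f(U^jx)$ and $M_U^{\ast}f=\sup_{s\ge1}|M_U^sf|$ the usual ergodic average and maximal operator of a single measure-preserving $U$, and with $B_{s_2,\dots,s_n}$ the average of the same shape over the system $U_2,\dots,U_n$ (so $B_{s_2,\dots,s_n}=A^{(n-1)}_{s_2,\dots,s_n}$ there), one has
\[
A^{(n)}_{s_1,\dots,s_n}f(x)=M_{U_1}^{s_1}\big(B_{s_2,\dots,s_n}f\big)(x).
\]
Since each $M_U^s$ is a positive contraction on every $L^p(X)$, $1\le p\le\infty$, the maximal operators obey the pointwise bounds $(A^{(n)})^{\ast}\le M_{U_1}^{\ast}\circ M_{U_2}^{\ast}\circ\cdots\circ M_{U_n}^{\ast}$ and $B^{\ast}\le M_{U_2}^{\ast}\circ\cdots\circ M_{U_n}^{\ast}$, where a star denotes the supremum of $|\cdot|$ over all indices.

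The analytic core is a strong-maximal-function type inequality. By Hopf's maximal ergodic theorem each $M_U^{\ast}$ is of weak type $(1,1)$, and it is obviously bounded on $L^{\infty}(X)$. The classical Jessen--Marcinkiewicz--Zygmund iteration of such pairs of estimates --- which uses nothing beyond these two properties of each factor --- shows that a composition of $k$ one-parameter ergodic maximal operators maps $L\log^{k} L(X)$ into $L^1(X)$ and is of weak type relative to $L\log^{k-1} L(X)$. With $k=n-1$ this would give the bound actually needed below, $B^{\ast}\colon L\log^{n-1} L(X)\to L^1(X)$ (and with $k=n$, the maximal inequality for \e{a25} itself). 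Carrying out the logarithmic bookkeeping in this iteration --- verifying that each composition with a weak-$(1,1)$ operator costs exactly one power of the logarithm, so that exactly the $n-1$ logarithms present in $L\log^{n-1}L(X)$ are consumed --- is the step I expect to be the main obstacle; the remaining steps are soft.

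For the inductive step proper, fix $f\in L\log^{n-1} L(X)$. As $L\log^{n-1} L(X)\subset L\log^{n-2} L(X)$, the induction hypothesis for $U_2,\dots,U_n$ gives a function $\bar f$ with $B_{s_2,\dots,s_n}f\to\bar f$ a.e.\ as $\min(s_2,\dots,s_n)\to\infty$; since $|B_{s_2,\dots,s_n}f|\le B^{\ast}|f|\in L^1(X)$ we also get $\bar f\in L^1(X)$. Hence the tail suprema $G_S:=\sup\{\,|B_{s_2,\dots,s_n}f-\bar f|:\ \min(s_2,\dots,s_n)\ge S\,\}$ decrease to $0$ a.e., are dominated by $B^{\ast}|f|+|\bar f|\in L^1(X)$, and therefore satisfy $\|G_S\|_{L^1(X)}\to0$; in particular $B_{s_2,\dots,s_n}f\to\bar f$ in $L^1(X)$. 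Now I would split
\[
M_{U_1}^{s_1}\big(B_{s_2,\dots,s_n}f\big)=M_{U_1}^{s_1}\bar f+M_{U_1}^{s_1}\big(B_{s_2,\dots,s_n}f-\bar f\big).
\]
The first summand tends a.e.\ as $s_1\to\infty$ to $P_1\bar f$, the Birkhoff limit of $\bar f$ under $U_1$ (using $\bar f\in L^1$). For the second, positivity of $M_{U_1}^{s_1}$ gives $|M_{U_1}^{s_1}(B_{s_2,\dots,s_n}f-\bar f)|\le M_{U_1}^{\ast}(G_S)$ whenever $\min(s_2,\dots,s_n)\ge S$; since $M_{U_1}^{\ast}$ is of weak type $(1,1)$ and $\|G_S\|_{L^1}\to0$, the functions $M_{U_1}^{\ast}(G_S)$ tend to $0$ in measure, and being decreasing in $S$ they tend to $0$ a.e. Combining the two,
\[
\limsup_{\min_j s_j\to\infty}\big|A^{(n)}_{s_1,\dots,s_n}f-P_1\bar f\big|\ \le\ \inf_{S}M_{U_1}^{\ast}(G_S)\ =\ 0\qquad\text{a.e.},
\]
which closes the induction; the almost everywhere limit is the iterated Birkhoff average $P_1 P_2\cdots P_n f$, where $P_i$ denotes the passage to the $U_i$-invariant conditional expectation. (Every supremum over the set $\{\min_j s_j\ge S\}$ may be restricted to its countable cofinal family of integer tuples, so no measurability issue arises.)
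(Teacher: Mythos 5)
The paper does not actually prove this statement: it is quoted as a known theorem of Dunford \cite{Dun} and Zygmund \cite{Zyg}, and the paper's own machinery (Theorems \ref{T1}--\ref{T3}, i.e.\ a Euclidean maximal inequality for parallelepiped averages along a linear map, transferred to the ergodic setting) recovers it only for \emph{commuting} transformations. Your route --- iterate one-parameter ergodic averages, control the composition of the one-parameter maximal operators by a strong-maximal-type inequality, and close an induction with a domination/density argument --- is essentially Zygmund's original argument, the one the paper alludes to in a remark in the introduction, and it has the virtue of not needing commutativity. The composition lemma you defer to (each composition with a positive operator that is weak $(1,1)$ and bounded on $L^\infty$ costs exactly one logarithm, so $k$ factors are of weak type relative to $L\log^{k-1}L$ and map $L\log^{k}L$ into $L^1$) is precisely Fava's theorem \cite{Fava}, cited in this paper's bibliography; so that step is sound, though you have only named it, and it is indeed the analytic heart of the matter.

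There is one genuine error: the factorization $A^{(n)}_{s_1,\dots,s_n}f=M_{U_1}^{s_1}(B_{s_2,\dots,s_n}f)$ is false for non-commuting transformations, and non-commutativity is the whole point of the Dunford--Zygmund theorem (both source papers are titled ``\dots for non-commutative transformations''). Indeed, $M_{U_1}^{s_1}(B_{s_2,\dots,s_n}f)(x)$ averages the values $f(U_2^{j_2}\cdots U_n^{j_n}U_1^{j_1}x)$, with $U_1^{j_1}$ applied to $x$ first, whereas \e{a25} has $U_1^{j_1}$ applied last. You must peel from the other end: $A^{(n)}_{s_1,\dots,s_n}f=M_{U_n}^{s_n}(A^{(n-1)}_{s_1,\dots,s_{n-1}}f)$, since $A^{(n-1)}_{s_1,\dots,s_{n-1}}f$ evaluated at $U_n^{j_n}x$ produces exactly $f(U_1^{j_1}\cdots U_{n-1}^{j_{n-1}}U_n^{j_n}x)$. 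With $U_n$ playing the role you assigned to $U_1$ --- the single Birkhoff average applied on the outside to the already-averaged function over $U_1,\dots,U_{n-1}$ --- every subsequent step of your induction goes through verbatim: the domination $|A^{(n-1)}f|\le B^{\ast}|f|\in L^1$ for $f\in L\log^{n-1}L$ via Fava with $k=n-1$, the monotone decrease of the tail suprema $G_S$ to $0$ in $L^1$, and the weak-$(1,1)$ control of $M_{U_n}^{\ast}(G_S)$. The a.e.\ limit is then $P_nP_{n-1}\cdots P_1f$ rather than $P_1\cdots P_nf$.
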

This result has been generalized for general contraction operators on $L^1$, considering those instead of the operators $f\to f\circ U_k$ generated by the measure-preserving transformations  $U_k$ (Dunford-Schwartz \cite{DuSw}, Fava \cite{Fava}). Hagelstein and Stokolos in \cite{Sto} proved the sharpness of the class of  functions $L\log^{n-1} L(X)$ in the context of \trm{OT1}.  Namely,
\begin{OldTheorem}[Hagelstein-Stokolos]\label{OT2}
Let a collection of invertible commuting measure-preserving transformations $\ZU=\{U_k:\,k=1,2,\ldots, n\}$ be non-periodic, that is for any non-trivial collection of integers $p_k\in \ZZ$, $k=1,2,\ldots, n$ we have
\begin{equation}
	\mu\{U_1^{p_1}\circ\ldots\circ U_n^{p_n}(x)=x\}=0.
\end{equation}
If $\Phi(t)=o(t\log^{n-1}t)$ as $t\to\infty$, then there exists a function $f\in L_\Phi(X)$ such that averages \e{a25} unboundedly diverge a.e..
\end{OldTheorem}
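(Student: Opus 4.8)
The plan is to run a Calder\'on-type transference in reverse: a.e.\ convergence of the averages \e{a25} for \emph{every} $f\in L_\Phi(X)$ would, through the non-periodicity of $\ZU$ and the Banach principle, force a weak-type bound on $L_\Phi(X)$ for the ergodic strong maximal operators
\begin{equation*}
  M^{(R)}_\ZU f(x)=\sup_{\min_j s_j\ge R}\ \frac{1}{s_1\cdots s_n}\sum_{j_1=0}^{s_1-1}\cdots\sum_{j_n=0}^{s_n-1}\bigl|f\bigl(U_1^{j_1}\cdots U_n^{j_n}x\bigr)\bigr|,
\end{equation*}
and such a bound transfers back to the ordinary strong maximal operator $M_S$ on $\ZR^n$, contradicting the classical sharpness of the Jessen--Marcinkiewicz--Zygmund differentiation theorem for $\Phi(t)=o\bigl(t(\log^+t)^{n-1}\bigr)$. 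So it suffices to exhibit, for each $R$, functions in $L_\Phi(X)$ of arbitrarily small ``$\Phi$-mass'' on which $M^{(R)}_\ZU$ is arbitrarily large on a set of measure bounded below, and then to invoke the condensation-of-singularities principle.

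\emph{Step 1 (Euclidean model).} From a Saks-type construction behind the sharpness of the Jessen--Marcinkiewicz--Zygmund theorem, for every $m$ there is a nonnegative step function $g_m$ on $Q=[0,1)^n$, constant on cells of a dyadic grid of mesh $2^{-\ell_m}$, with $\int_Q\Phi(g_m)\,dt\le 2^{-m}$ and such that for each $x$ in a subset $E_m\subset Q$ with $|E_m|\ge\tfrac12$ there is an axis-parallel rectangle $\rho_m(x)\subset Q$ of diameter $<1$ with $|\rho_m(x)|^{-1}\int_{\rho_m(x)}g_m\,dt\ge m$. (If preferred, the full Saks example gives a single $g\in L_\Phi(Q)$ with $M_Sg=\infty$ a.e.\ on $Q$; it is the quantitative family $\{g_m\}$ that the transference consumes.)

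\emph{Step 2 (transference).} Since $U_1,\dots,U_n$ commute and, by non-periodicity, generate an essentially free measure-preserving $\ZZ^n$-action, the Rokhlin lemma for free $\ZZ^n$-actions gives, for any $N$ that is a large multiple of $2^{\ell_m}$ and any $\varepsilon>0$, a set $B\in\ZB$ with the tower $\{U_1^{j_1}\cdots U_n^{j_n}B:\,0\le j_i<N\}$ pairwise disjoint and filling $X$ up to measure $\varepsilon$. Identifying the tower with a discretization of $Q$ (the point $U_1^{j_1}\cdots U_n^{j_n}x$ corresponding to the cell around $(j_1/N,\dots,j_n/N)$), I pull $g_m$ back to a function $f_m$ on $X$, setting $f_m=0$ off the tower; then $\int_X\Phi(|f_m|)\,d\mu\le\int_Q\Phi(g_m)\,dt+O(\varepsilon)$. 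For $x$ in the ``bulk'' of the tower — all but an $O(\sqrt\varepsilon)$-fraction, discarding points so near the faces $\{j_i\in\{0,N-1\}\}$ that an averaging window escapes the tower — the ergodic sum in \e{a25} with $(s_1,\dots,s_n)$ equal to the side-lengths of the rescaled copy of $\rho_m$ at the point corresponding to $x$ reproduces $|\rho_m|^{-1}\int_{\rho_m}g_m$ up to $o(1)$, and these side-lengths exceed $R$ once $N$ is large. Hence $M^{(R)}_\ZU f_m\ge m-o(1)$ on a set of measure $\ge\tfrac12-O(\sqrt\varepsilon)$ while the $\Phi$-mass of $f_m$ is $\le 2^{-m}+O(\varepsilon)$; letting $\varepsilon=\varepsilon_m\to0$ shows $M^{(R)}_\ZU$ is not continuous at $0$ as a map $L_\Phi(X)\to L^0(X)$.

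\emph{Step 3 and the main obstacle.} Given this discontinuity at the origin, the Banach principle together with the condensation-of-singularities (Stein--Sawyer) theorem produces, for each $R$, a residual set of $f\in L_\Phi(X)$ with $M^{(R)}_\ZU f=\infty$ a.e.; intersecting over $R\in\ZN$ yields a single $f\in L_\Phi(X)$ for which $\limsup$ of the averages \e{a25} as $\min_j s_j\to\infty$ equals $+\infty$ a.e., i.e.\ \e{a25} diverges unboundedly a.e. I expect Step 2 to be the crux: the multidimensional Rokhlin machinery must be deployed under only the bare non-periodicity hypothesis (no ergodicity, no quantitative freeness), and — more delicately — one must show that orbit points leaving the Rokhlin tower before the averaging rectangle is exhausted contribute negligibly, so that the lower bound $M^{(R)}_\ZU f_m\gtrsim m$ genuinely survives transference. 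A secondary technical burden is that $\Phi$, being only $o\bigl(t(\log^+t)^{n-1}\bigr)$, need be neither convex nor doubling, so the control of $\int\Phi(|f_m|)$ under pullback and the residuality argument must be carried out through the weak structural properties of $\Phi$ rather than any genuine norm inequality.
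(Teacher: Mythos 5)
You are attempting to prove a statement that this paper does not prove at all: Theorem \ref{OT2} is quoted verbatim from Hagelstein--Stokolos \cite{Sto} and is used in Section 5 only as a black box for the sharpness discussion. So there is no in-paper proof to compare against; what follows measures your proposal against the known argument in \cite{Sto}, whose overall strategy -- transfer the Euclidean sharpness of the Jessen--Marcinkiewicz--Zygmund/de Guzm\'an theory to the ergodic setting through Rokhlin towers for the free $\ZZ^n$-action guaranteed by non-periodicity -- is indeed the one you describe. Steps 1 and 2 are sound in outline: non-periodicity is exactly essential freeness of the $\ZZ^n$-action, the multidimensional Rokhlin lemma does not require ergodicity, and the anchoring mismatch (the Euclidean rectangle $\rho_m(x)$ merely contains $x$, while the averages \e{a25} are one-sided sums based at $x$) costs only a factor $2^n$ by splitting into orthants.

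The genuine gap is Step 3. Sawyer's condensation-of-singularities theorem, in the form you invoke, requires the operators to commute with a family of measure-preserving transformations that acts ergodically (or mixes in a suitable sense); under bare non-periodicity the system $(X,\mu,\ZU)$ need not be ergodic, so ``divergence on a set of measure $\ge \tfrac12 - O(\sqrt\varepsilon)$'' for a residual set of $f$ does not upgrade to divergence \emph{almost everywhere} for a single $f$ by that route. The way this is actually closed is to bypass Sawyer entirely: choose a sequence of Rokhlin towers filling $X$ up to measure $\varepsilon_m$ with $\sum_m\varepsilon_m<\infty$, plant the scaled copy of $g_m$ in the $m$-th tower, and take $f=\sup_m f_m$ (or a suitable sum); a Borel--Cantelli argument then shows that almost every $x$ lies in the bulk of infinitely many towers, whence $\limsup$ of the averages is $+\infty$ a.e. For this to work you must also verify $\sum_m\int_X\Phi(|f_m|)\,d\mu<\infty$ implies $f\in L_\Phi(X)$, which is where your own caveat about $\Phi$ being neither convex nor doubling bites: the construction in \cite{Sto} arranges the $f_m$ to have essentially disjoint supports precisely so that no subadditivity of $\Phi$ is needed. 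As written, your proof does not go through without replacing Step 3 by such a direct construction.
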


\begin{definition}
	A set of invertible commuting measure-preserving (ICMP) transformations $\ZU=\{U_k:\,k=1,2,\ldots, n\}$ is said to be \emph{dependent} if there is a non-trivial collection of integers $p_k\in \ZZ$, $k=1,2,\ldots, n$, such that 
	\begin{equation}\label{a21}
		(U_1^{p_1}\circ\ldots\circ U_n^{p_n})(x)=x
	\end{equation}
almost everywhere on $X$. If there is no such a collection of integers $p_k$, then we say $\ZU$ is \emph{independent}. 
	The \emph{rank of $\ZU$} denoted by $\rank(\ZU)$ will be called the largest integer $r$ for which there is an independent subset of cardinality $r$ in $\ZU$.
\end{definition}
\begin{remark}
	Note that according to our definition, the independence of $\ZU$ requires the failure of \e{a21} on a set of positive measure for any non-trivial collection of integers $\{p_k\}$, while the condition of non-periodicity in \trm{OT2} is a stronger version of independence, since in this case the failure of \e{a21} is required almost everywhere.
\end{remark}

The main result of the present paper provides a generalization of \trm{OT1}. Namely, it says that in fact a.e. convergence of averages \e{a25} holds in a larger class of functions $L\log^{d-1} L\supset L\log^{n-1} L$, where  $d=\rank(\ZU)\le n$. First we prove the following weak type maximal inequality, where $\Log_n t$ denotes the function in \e{a51}, i.e.
\begin{equation}
	\Log_n(t)= t\left(1+\max\{0,\log^n t\}\right).
\end{equation}
\begin{theorem}\label{T2}
	Let $\ZU=\{U_k:\,k=1,2,\ldots, n\}$ be a set of ICMP transformations of rank $d$.
	Then, for any function  $f\in L\log^{d-1}L(X)$  and $\lambda >0$, we have 
	\begin{align}
		\mu\Biggl\{x\in X:\, \sup_{s_j\ge 0}\frac{1}{s_1\ldots s_n}\sum_{k_1=0}^{s_1-1}\cdots \sum_{k_n=0}^{s_n-1}&\left|f\left((U_1^{k_1}\circ\cdots \circ U_n^{k_n})(x)\right)\right|>\lambda\Biggr\} \\
		\le &C(\ZU)\int_X\Log_{d-1}\left(\frac{|f|}{\lambda}\right),\label{a22}
	\end{align}
 where $C(\ZU)$ is a constant depending only on $\ZU$.
\end{theorem}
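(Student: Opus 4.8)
The plan is to transfer everything to the abelian group $\ZZ^{d}$: the relations among the $U_k$ collapse the effective orbit of a point to a set which, after a bounded-to-one reduction, is a $\ZZ^{d}$-orbit, and on $\ZZ^{d}$ the average over the image of a box will be dominated by finitely many ordinary strong maximal functions. Write $U^{\mathbf p}:=U_1^{p_1}\cdots U_n^{p_n}$ for $\mathbf p=(p_1,\dots,p_n)\in\ZZ^{n}$ and $\Lambda:=\{\mathbf p\in\ZZ^{n}:U^{\mathbf p}=\mathrm{id}\ \text{a.e.}\}$, a subgroup of $\ZZ^{n}$; the action $\mathbf p\mapsto U^{\mathbf p}$ descends a.e. to $G:=\ZZ^{n}/\Lambda$. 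An independent subset $\{U_k\}_{k\in S}$ embeds $\ZZ^{S}$ into $G$ and hence into $G/\mathrm{tors}(G)$, so $|S|$ is at most the free rank of $G$; conversely the images of $e_1,\dots,e_n$ span $G\otimes\mathbb Q$, so as many of them as the free rank are $\mathbb Q$-independent, giving an independent subset of that size. Hence $d:=\rank(\ZU)$ equals the free rank of $G$, and $G\cong F\oplus\ZZ^{d}$ with $F=\mathrm{tors}(G)$ finite. Let $\phi:\ZZ^{n}\to\ZZ^{d}$ be the composite $\ZZ^{n}\to G\to G/F$, set $v_k:=\phi(e_k)$, and fix a homomorphic section $\sigma:\ZZ^{d}\to\ZZ^{n}$ of $\phi$ (possible since $\ZZ^{d}$ is free); this gives a genuine measure preserving $\ZZ^{d}$-action $T^{\mathbf q}:=U^{\sigma(\mathbf q)}$. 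Since $U^{\ell}=\mathrm{id}$ for $\ell\in\Lambda$ and $\ker\phi/\Lambda\cong F$ is finite, $\ell\mapsto U^{\ell}$ ($\ell\in\ker\phi$) takes only finitely many values $V_1,\dots,V_m$; put $F_i:=|f\circ V_i|$, each equimeasurable with $|f|$ and hence in $L\log^{d-1}L(X)$.

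The first step is to reduce the ergodic average to a weighted average of the $F_i$ over the $\ZZ^{d}$-action. Writing $\mathbf p=\sigma(\phi(\mathbf p))+\ell(\mathbf p)$ with $\ell(\mathbf p)\in\ker\phi$ gives $U^{\mathbf p}x=V_{i(\mathbf p)}\bigl(T^{\phi(\mathbf p)}x\bigr)$ for some index $i(\mathbf p)$, so $|f(U^{\mathbf p}x)|\le\sum_{i=1}^{m}F_i(T^{\phi(\mathbf p)}x)$. Averaging over the box $B_{\mathbf s}:=\prod_k[0,s_k)\cap\ZZ^{n}$ and collecting terms with the same value of $\phi$,
\[
\frac1{|B_{\mathbf s}|}\sum_{\mathbf p\in B_{\mathbf s}}|f(U^{\mathbf p}x)|\ \le\ \sum_{i=1}^{m}\ \sum_{\mathbf q\in\ZZ^{d}}w_{\mathbf s}(\mathbf q)\,F_i(T^{\mathbf q}x),\qquad w_{\mathbf s}(\mathbf q):=\frac{\#\{\mathbf p\in B_{\mathbf s}:\phi(\mathbf p)=\mathbf q\}}{|B_{\mathbf s}|},
\]
with $\sum_{\mathbf q}w_{\mathbf s}(\mathbf q)=1$ and $\supp w_{\mathbf s}$ the lattice zonotope $Z_{\mathbf s}:=\{\sum_k j_k v_k:0\le j_k<s_k\}$. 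The fibres $B_{\mathbf s}\cap\phi^{-1}(\mathbf q)$ are parallel affine slices of the convex box $\prod_k[0,s_k]$, so Brunn--Minkowski makes $\mathbf q\mapsto(\text{fibre count})^{1/(n-d)}$ essentially concave on $Z_{\mathbf s}$, whence its maximum is at most a $\ZU$-constant times its average; this yields $w_{\mathbf s}\le C(\ZU)\,\#(Z_{\mathbf s})^{-1}\mathbf 1_{Z_{\mathbf s}}$, so the left-hand side above is at most $C(\ZU)\sum_{i}\#(Z_{\mathbf s})^{-1}\sum_{\mathbf q\in Z_{\mathbf s}}F_i(T^{\mathbf q}x)$. (One first disposes of the finitely many ``thin'' configurations in which some $s_k$ is bounded, so that throughout one may compare lattice-point counts with volumes.)

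The second step is to dominate the zonotope average by strong maximal functions. For a $d$-element $S\subseteq\{1,\dots,n\}$ with $\{v_k\}_{k\in S}$ linearly independent, let $\mathfrak M_S$ be the ergodic maximal operator over boxes with edges parallel to $\{v_k\}_{k\in S}$ that contain $0$. Choosing $S$ to maximise $|\det(v_k)_{k\in S}|\prod_{k\in S}s_k$ and expanding each $v_k$ ($k\notin S$) in the basis $\{v_l\}_{l\in S}$, the maximality forces the coefficients to satisfy $|c_{kl}|s_k\le s_l$, hence $Z_{\mathbf s}\subseteq\sum_{l\in S}\bigl[-(n-d)s_l,(1+(n-d))s_l\bigr]v_l$, a box whose volume is at most $(1+2(n-d))^{d}$ times that of $Z_{\mathbf s}$ by the zonotope volume formula. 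Therefore $\#(Z_{\mathbf s})^{-1}\sum_{\mathbf q\in Z_{\mathbf s}}F_i(T^{\mathbf q}x)\le C(\ZU)\sum_{S}\mathfrak M_S F_i(x)$. Finally, each $\mathfrak M_S$ obeys $\mu\{\mathfrak M_S F>\lambda\}\le C_S(\ZU)\int_X\Log_{d-1}(|F|/\lambda)$: by Calder\'on transference this reduces to the analogous uncentered maximal operator on $\ZZ^{d}$, and a linear change of variables sending the directions $\{v_k\}_{k\in S}$ to the coordinate axes turns that (up to a change of the underlying lattice) into the uncentered strong maximal function on $\ZZ^{d}$, for which the estimate is the classical Jessen--Marcinkiewicz--Zygmund weak $(1,L(\log L)^{d-1})$ bound in its discrete form.

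Assembling the two steps, $\sup_{\mathbf s}|B_{\mathbf s}|^{-1}\sum_{\mathbf p\in B_{\mathbf s}}|f(U^{\mathbf p}x)|\le C(\ZU)\sum_{i=1}^{m}\sum_{S}\mathfrak M_S F_i(x)$, a sum of $O_{\ZU}(1)$ nonnegative terms; distributing $\lambda$ among them, invoking the weak-type bounds, the equimeasurability $\int_X\Log_{d-1}(|F_i|/\lambda)=\int_X\Log_{d-1}(|f|/\lambda)$, and the elementary inequality $\Log_{d-1}(ct)\le C'(c)\Log_{d-1}(t)$ (which holds with no additive constant because $\Log_{d-1}$ vanishes linearly at the origin), one arrives at \e{a22}. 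I expect the main obstacle to be the second step together with the fibre estimate of the first step --- controlling, uniformly in $\mathbf s$, the image of a box under the fixed linear map $\phi$: both the flatness of the weight $w_{\mathbf s}$ and the fact that the zonotope $Z_{\mathbf s}$ always sits inside a comparable box aligned with one of the finitely many lattice bases $\{v_k\}_{k\in S}$. The reduction past the finite torsion group $F$ to a genuine $\ZZ^{d}$-action is the structural point that lets the rank $d$, rather than $n$, govern the Orlicz class; comparing lattice-point counts with volumes when some $s_k$ is small is an unavoidable but routine nuisance.
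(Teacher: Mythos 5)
Your proposal is correct in its overall architecture and follows a genuinely different route from the paper. You stay entirely on the discrete side: you quotient $\ZZ^n$ by the lattice of relations, split off the finite torsion part to get a free $\ZZ^d$-action $T$, and rewrite the ergodic average as a weighted sum $\sum_{\mathbf q}w_{\mathbf s}(\mathbf q)F_i(T^{\mathbf q}x)$, which you then dominate by finitely many strong maximal operators along independent sub-bases $\{v_k\}_{k\in S}$ chosen by maximizing $|\det(v_k)_{k\in S}|\prod_{k\in S}s_k$. The paper instead reduces \e{a22} to a continuous weak-type inequality for the operator $M_A$ on $\ZR^d$ (\trm{T1}), proved via the Guzm\'an--Welland parallelepiped lemma (\lem{L2}, \lem{L3}), then discretizes (\cor{C2}) and transfers by averaging over a large cube of powers. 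The geometric core is the same in both arguments --- a degenerate box average in directions $v_1,\dots,v_n$ spanning only a $d$-dimensional space is dominated by a nondegenerate box average over an independent sub-collection --- but your torsion-quotient setup absorbs the paper's separate treatment of the case $l_k>1$ (the residue decomposition near the end of the paper's proof) in one uniform stroke, and your determinant-maximization replaces the greedy ordering $r_1\ge\cdots\ge r_n$ in \lem{L2}. What the paper's detour through $\ZR^d$ buys is a clean place to quote de Guzm\'an's inequality; what your route buys is that no continuous measure theory (the singular convolutions \e{a15}) is needed.

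The one step that does not hold up as written is the Brunn--Minkowski flatness claim for $w_{\mathbf s}$. Brunn--Minkowski gives concavity of $(\text{slice volume})^{1/(n-d)}$ for the \emph{continuous} slices of the box; the discrete fibre counts $\#\{\mathbf p\in B_{\mathbf s}:\phi(\mathbf p)=\mathbf q\}$ are not concave in any useful sense, and the discrepancy between counts and volumes is not confined to ``finitely many configurations with some $s_k$ bounded'': slices near the boundary of the zonotope are thin in the directions of the rank-$(n-d)$ lattice $\ker\phi$ no matter how large all the $s_k$ are. Fortunately the bound you actually need is immediate from the convolution structure. Write $w_{\mathbf s}=\nu_1*\cdots*\nu_n$ with $\nu_k$ the uniform probability measure on $\{0,v_k,\dots,(s_k-1)v_k\}$; grouping the factors as $(\text{those with }k\in S)*(\text{those with }k\notin S)$, the first factor is the uniform probability measure on a set of exactly $\prod_{k\in S}s_k$ points (by independence of $\{v_k\}_{k\in S}$) and the second is a probability measure, so $\|w_{\mathbf s}\|_\infty\le\bigl(\prod_{k\in S}s_k\bigr)^{-1}$ and hence $w_{\mathbf s}\le\bigl(\prod_{k\in S}s_k\bigr)^{-1}\ZI_{Z_{\mathbf s}}$, with no lattice-point-versus-volume comparison at all. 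This is precisely the mechanism of the paper's \lem{L3}, where $\|f_R\|_\infty\le\|f_Q\|_\infty$ because $\mu_R=\mu_Q*\mu_H$. With that substitution, and with the routine verification that your containing box $\sum_{l\in S}[-(n-d)s_l,(1+(n-d))s_l]v_l$ meets $\ZZ^d$ in at most $C(\ZU)\prod_{l\in S}s_l$ points, the remaining steps (Calder\'on transference, change of lattice, the discrete Jessen--Marcinkiewicz--Zygmund/de Guzm\'an bound) go through as you describe.
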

As a  corollary of \e{a22} we obtain the following.
\begin{theorem}\label{T3}
	Let $\ZU=\{U_k:\,k=1,2,\ldots, n\}$ be a set of ICMP transformations of rank $d$.
	Then, for any function  $f\in L\log^{d-1}L(X)$ the averages \e{a25}	converge almost everywhere as $\min s_k\to\infty $.
\end{theorem}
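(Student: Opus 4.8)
The plan is to deduce \trm{T3} from the weak type maximal inequality \e{a22} by the standard maximal-inequality / dense-class scheme (Banach's continuity principle), taking the bounded functions as the dense class, on which a.e.\ convergence is already available. Write $A_\s f$ for the average in \e{a25} with $\s=(s_1,\dots,s_n)$, and let $A^*f$ denote the maximal function on the left-hand side of \e{a22}, so that $|A_\s f|\le A_\s|f|\le A^*|f|$ for every $\s$ and, by \e{a22}, $A^*|f|<\infty$ a.e.\ whenever $f\in L\log^{d-1}L(X)$. For such $f$ put
\begin{equation}
\Omega f:=\limsup_{\min_j s_j\to\infty}A_\s f-\liminf_{\min_j s_j\to\infty}A_\s f,
\end{equation}
which is measurable (each of the two limits is built from countable suprema and infima), and observe that $A_\s f$ converges a.e.\ exactly when $\Omega f=0$ a.e.\ and $A^*|f|<\infty$ a.e.

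Two ingredients are needed. \emph{Density.} For $f\in L\log^{d-1}L(X)$, $\lambda>0$ and $g_N:=f\cdot\ZI_{\{|f|\le N\}}\in L^\infty(X)$, the integrand $\Log_{d-1}(|f-g_N|/\lambda)$ vanishes on $\{|f|\le N\}$ and equals $\Log_{d-1}(|f|/\lambda)$ on $\{|f|>N\}$; since $\Log_{d-1}$ is continuous with $\Log_{d-1}(0)=0$ and $\Log_{d-1}(|f|/\lambda)\in L^1(X)$ (because $\mu(X)<\infty$ and $f\in L\log^{d-1}L(X)$), dominated convergence gives $\int_X\Log_{d-1}(|f-g_N|/\lambda)\,d\mu\to0$ as $N\to\infty$. \emph{Convergence on $L^\infty$.} For $g\in L^\infty(X)$ the averages $A_\s g$ converge a.e.: this is immediate from the Dunford--Zygmund theorem (\trm{OT1}), because $L^\infty(X)\subset L\log^{n-1}L(X)$. (A self-contained proof is also available by induction on $n$: the $U_k$ commute, so $A_\s g$ is an iterated one--parameter average; peeling off the $U_n$--average and writing $\frac1{s_n}\sum_{j<s_n}g(U_n^j\,\cdot)=g^*+\psi_{s_n}$, where $g^*$ is the $U_n$--invariant ergodic limit and $\psi_{s_n}\to0$ a.e.\ with $\|\psi_{s_n}\|_\infty\le2\|g\|_\infty$, the $g^*$--term converges by the inductive hypothesis for $\{U_1,\dots,U_{n-1}\}$, while the remaining term is controlled, uniformly in $s_n\ge S$, by $B^*\!\big(\sup_{s_n\ge S}|\psi_{s_n}|\big)$, $B^*$ being the maximal operator of $\{U_1,\dots,U_{n-1}\}$; since $\sup_{s_n\ge S}|\psi_{s_n}|\downarrow0$ a.e.\ and is bounded, this tends to $0$ in measure as $S\to\infty$ by \e{a22} for $\{U_1,\dots,U_{n-1}\}$ together with dominated convergence.)

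With these in hand, fix $f\in L\log^{d-1}L(X)$ and $\lambda>0$. For $g\in L^\infty(X)$ we have $A_\s f=A_\s g+A_\s(f-g)$; since $A_\s g\to\bar g$ a.e.\ with $\bar g$ finite, the limits superior and inferior of $A_\s f$ differ from those of $A_\s(f-g)$ by $\bar g$, so $\Omega f=\Omega(f-g)\le2A^*|f-g|$ a.e. Hence, applying \e{a22} to $f-g\in L\log^{d-1}L(X)$,
\begin{equation}
\mu\{\Omega f>2\lambda\}\le\mu\{A^*|f-g|>\lambda\}\le C(\ZU)\int_X\Log_{d-1}\!\left(\frac{|f-g|}{\lambda}\right)d\mu,
\end{equation}
and choosing $g=g_N$ with $N\to\infty$ forces the right-hand side to $0$, so $\mu\{\Omega f>2\lambda\}=0$. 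Letting $\lambda$ range over $\{1/k:k\in\ZN\}$ gives $\Omega f=0$ a.e.; since also $|A_\s f|\le A^*|f|<\infty$ a.e., the averages $A_\s f$ converge a.e.\ to a finite limit, which is \trm{T3}. The entire content of the theorem sits in the maximal inequality \e{a22} (that is, \trm{T2}); granting it, the argument above is routine, the only mildly delicate point — should one avoid quoting \trm{OT1} — being that the inner average in the iterated--Birkhoff step depends on the very index $s_n$ that is sent to infinity, which is handled by replacing it with its decreasing majorant before invoking \e{a22}.
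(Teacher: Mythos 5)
Your proof is correct and follows essentially the same route as the paper: both deduce a.e.\ convergence from the weak type inequality \e{a22} by approximating $f\in L\log^{d-1}L(X)$ with bounded functions, on which convergence is supplied by \trm{OT1}, and both control the oscillation of $A_\s f$ by the maximal function of $f-g$. The only differences are cosmetic — you phrase the conclusion via $\limsup-\liminf$ rather than the Cauchy difference $|A_\n(f)-A_\m(f)|$, and you spell out the density step (truncation plus dominated convergence) that the paper leaves implicit.
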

\begin{remark}
	We will see in the last section that the class $L\log^{d-1}L(X)$ of the functions in \trm{T3} is optimal. More precisely, if the corresponding independent subset of cardinality $d=\rank(\ZU)$ in $\ZU$ is ``strongly independent" (i.e. non-periodic), then  under the condition $\Phi(t)=o(t\log^{d-1}t)$ there exists a function $f\in L_\Phi(X)$ with a.e. diverging averages \e{a25}.
\end{remark}
The inequality \e{a22} will be deduced from a maximal inequality on $\ZR^n$. Let $A:\ZR^n\to \ZR^d$ be a linear operator given by the matrix
\begin{equation}\label{a20}
	A=\{a_{kj}:\, 1\le j\le n,\, 1\le k\le d\}
\end{equation}
of size $d\times n$ ($d$-rows and $n$-columns). We consider the maximal function
\begin{equation}\label{a18}
	M_Af(\x)=\sup_{R}\frac{1}{|R|}\int_{R}|f(\x+A\cdot \t)|d\t,\quad \x\in \ZR^d,
\end{equation}
where $\sup $ is taken over all $n$-dimensional symmetric intervals 
\begin{equation}
	R=\left\{\t=(t_1,\ldots,t_n)\in \ZR^n:\,t_j\in [-r_j,r_j],\,j=1,2,\ldots,n\right\}\subset \ZR^n.
\end{equation}
Denote by $\rank A$ the rank of the matrix $A$. 
\begin{theorem}\label{T1}
	Let $A$ be the matrix \e{a20} and $r=\rank A$. Then for any function $f\in L(\log^+ L)^{r-1}(\ZR^d)$ it holds the bound
	\begin{equation}\label{a16}
		|\{\x\in \ZR^d:\, M_A f(\x)>\lambda)\}|\le C(A)\int_{\ZR^d}\Log_{r-1}\left(\frac{|f|}{\lambda}\right),
	\end{equation}
	where $C(A)$ is a constant, depending only the matrix $A$.
\end{theorem}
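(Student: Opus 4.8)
The plan is to reduce \trm{T1} to the classical Jessen--Marcinkiewicz--Zygmund (JMZ) theorem, which asserts that the strong maximal function $M_S$ on $\ZR^d$ satisfies $|\{M_Sg>\lambda\}|\le C_d\int_{\ZR^d}\Log_{d-1}(|g|/\lambda)$. We may assume $f\ge 0$, and write $a_1,\dots,a_n\in\ZR^d$ for the columns of $A$, so $A\t=\sum_{j=1}^n t_ja_j$. Two preliminary reductions are available. First, deleting a zero column of $A$ alters neither $M_Af$ nor $\rank A$, so we may assume every $a_j\ne 0$. Second, a linear change of variables in $\ZR^d$ (which changes only $C(A)$) lets us assume that the range of $A$ equals $\ZR^r\times\{0\}^{d-r}$; writing $\x=(\x',\x'')\in\ZR^r\times\ZR^{d-r}$ and letting $A'\colon\ZR^n\to\ZR^r$ be the resulting surjection, one has $M_Af(\x',\x'')=M_{A'}\bigl(f(\cdot,\x'')\bigr)(\x')$, so Fubini in $\x''$ reduces \e{a16} to the case $r=d$, i.e. $A$ onto, $n\ge d$, and no zero columns. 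We treat this case below.

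The core of the argument is the pointwise domination
\[
M_Af(\x)\ \le\ C(A)\sum_{B\in\mathcal B}M_{A_B}f(\x),
\]
where $\mathcal B$ is a finite family of $d$-element subsets $B\subset\{1,\dots,n\}$ for which $\{a_j\}_{j\in B}$ is a basis of $\ZR^d$, $A_B$ is the $d\times d$ submatrix with columns $a_j$ $(j\in B)$, and $M_{A_B}f(\x)=\sup_{R'}\frac1{|R'|}\int_{R'}f(\x+A_B\s)\,d\s$, the supremum over symmetric boxes $R'\subset\ZR^d$. To prove it, fix $R=\prod_{j=1}^n[-r_j,r_j]$; permuting the columns of $A$ (finitely many permutations, all admitted into $C(A)$) we may assume $r_1\ge\dots\ge r_n$. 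Choose $B=\{j_1<\dots<j_d\}$ greedily: $j_1=1$, and $j_{k+1}$ is the least index with $a_{j_{k+1}}\notin\sp\{a_{j_1},\dots,a_{j_k}\}$. Surjectivity makes $\{a_j\}_{j\in B}$ a basis; by minimality, each $a_i$ with $i\notin B$ lies in $\sp\{a_{j_l}:\,j_l<i\}$, and since the $r_j$ decrease, every such $a_{j_l}$ has $r_{j_l}\ge r_i$. Writing $a_i=\sum_{l:\,j_l<i}c_{il}a_{j_l}$ and regrouping gives $A\t=\sum_{l=1}^d s_{j_l}a_{j_l}=A_B\s$ with $s_{j_l}=t_{j_l}+\sum_{i\notin B,\,i>j_l}c_{il}t_i$, whence $|s_{j_l}|\le C_0(A)r_{j_l}$ on $R$, where $C_0(A)=1+\max_l\sum_i|c_{il}|$. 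The substitution $u_{j_l}=s_{j_l}$, $u_i=t_i$ $(i\notin B)$ is a unipotent shear of $\ZR^n$, so it has Jacobian $1$ and maps $R$ into $R'\times R''$ with $R'=\prod_l[-C_0r_{j_l},C_0r_{j_l}]$ and $R''=\prod_{i\notin B}[-r_i,r_i]$. Since $f(\x+A\t)$ depends only on the $R'$-coordinates, integrating out $R''$ and dividing by $|R|$ yields
\[
\frac1{|R|}\int_R f(\x+A\t)\,d\t\ \le\ \frac{|R''|}{|R|}\int_{R'}f(\x+A_B\s)\,d\s\ =\ \frac1{\prod_l 2r_{j_l}}\int_{R'}f(\x+A_B\s)\,d\s\ \le\ C_0(A)^d\,M_{A_B}f(\x),
\]
and taking the supremum over $R$, collecting into $\mathcal B$ the bases produced by the various orderings, gives the domination.

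Finally, for each $B\in\mathcal B$ the operator $M_{A_B}$ is, up to the invertible change of variables given by $A_B$, the strong maximal function: $M_{A_B}f=\bigl(M_S(f\circ A_B)\bigr)\circ A_B^{-1}$. Hence, by the JMZ theorem together with this change of variables (which contributes a factor $|\det A_B|$ that cancels), $|\{M_{A_B}f>\lambda\}|\le C_d\int_{\ZR^d}\Log_{d-1}(|f|/\lambda)$. Combining with the core domination via the union bound, and using the elementary inequality $\Log_{d-1}(ct)\le C(c,d)\Log_{d-1}(t)$ for $c\ge1$ to absorb the constants appearing inside $\Log_{d-1}$, we obtain \e{a16} for $r=d$; together with the reductions of the first paragraph this proves \trm{T1}. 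The main obstacle is the core domination, and in particular the greedy choice of $B$ \emph{together with} the size ordering $r_1\ge\dots\ge r_n$: this pairing is exactly what forces $|s_{j_l}|\le C_0(A)r_{j_l}$, and hence what lets one absorb the $n-d$ ``excess'' integrations at the cost of a constant rather than one logarithm per variable --- the mechanism behind the improvement of the Orlicz exponent from $n-1$ to $d-1=\rank A-1$.
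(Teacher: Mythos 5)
Your proof is correct. At its combinatorial core it coincides with the paper's: both order the side-lengths $r_1\ge\dots\ge r_n$, greedily extract a maximal independent subset of the columns, observe that every discarded column is a combination of earlier (hence longer) selected ones, and thereby dominate $M_A$ by a finite sum of operators $M_{A_B}$ attached to invertible $d\times d$ submatrices, which are then handled by the Jessen--Marcinkiewicz--Zygmund/Guzm\'an bound for the strong maximal function. Where you genuinely diverge is in how the key domination is implemented. The paper formalizes the averages as convolutions with singular measures $\mu_R$ and proceeds through Lemmas \ref{L1}--\ref{L3}: a Guzm\'an--Welland-type geometric statement ($Q\prec R$ together with $R\subset C(\ZU)\cdot Q$) is converted into the measure inequality $\mu_R\le C(\ZU)\mu_{R'}$ via an $L^\infty$ bound on the density of $\mu_R$. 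You instead perform a unipotent shear in the parameter space $\ZR^n$ with Jacobian $1$, integrate out the redundant variables, and read off the constant $C_0(A)^d$ directly; this is more elementary and bypasses the convolution-measure formalism entirely. You also make explicit the Fubini reduction to the full-rank case $r=d$, which the paper handles implicitly by working inside the subspace $\ZR_\ZU$ spanned by the columns. Both routes buy the same thing: the loss of only a bounded constant, rather than a logarithm, for each redundant column, which is exactly what improves the Orlicz exponent from $n-1$ to $r-1$.
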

\begin{remark}
	Observe that if $n=d=r$ and $A$ is the identity matrix of size $n$, then \e{a18} gives the well-known strong maximal function on $\ZR^n$, correspondingly, \e{a16} becomes the weak type inequality due to M.~de~Guzman \cite{Guz1} (see also \cite{Guz2}).
	 Moreover, inequality \e{a16} holds even if $A$ is a general invertible matrix and it follows from Guzman's inequality of \cite{Guz1}, simply using the equivalence of rectangular and parallelepiped differentiation bases on $\ZR^n$. Our proof of the full version of inequality \e{a16} is a reduction of the general case to the case of invertible $A$.
	\begin{remark}
		Note that papers \cite{Dun} and \cite{Zyg} suggest different proofs of \trm{OT1}. The proof of \cite{Dun} is straightforward and the convergence of averages \e{a25} was established only for the functions in $L^p$, $1<p<\infty$, while Zygmund \cite{Zyg} provides an inequality, which is the analogue of a similar inequality for the strong maximal function, originally proved in \cite{JMZ}. The latter is the weaker version of Guzman's inequality of \cite{Guz1} .  
	\end{remark}
\end{remark}

\section{Proof of \trm{T1}}

We will use the following equivalent form of the maximal function \e{a18}
\begin{equation}\label{a14}
M_\ZU f(\x)=\sup_{r_k>0}\frac{1}{2^nr_1\cdots r_n}\int_{-r_1}^{r_1}\cdots \int_{-r_n}^{r_n}|f(\x+t_1\u_1+\cdots+t_n\u_n)|dt_1\cdots dt_n,
\end{equation}
where the vector set $\ZU=\{\u_k,\,k=1,2,\ldots, n\}$ is formed by the columns of the matrix \e{a20}. So the rank of vectors $\ZU$ coincides with the rank of the matrix $A$.
Once again note that that if the collection of vectors are independent, i.e. the matrix $A$ is invertible, then inequality \e{a16} is known, and we are going to reduce the general case to the case of invertible $A$. We need several lemmas, concerning parallelepipeds in $\ZR^d$ and associated measures. 

For a vector $\x=(x_1,\ldots,x_d)\in \ZR^d$ we denote $|x|=(x_1^2+\ldots+x_d^2)^{1/2}$. Given set of vectors $\ZV\subset \ZR^d$ we denote by $\sp(\ZV)$ the linear space generated by $\ZV$ (sometimes this Euclidean space will be denoted by $\ZR_\ZU$).
The notation $|E|$ will stand for the Lebesgue measure of a set $E$ in an Euclidean space.
\begin{definition}
	Let $\ZU=\{\u_k:\,k=1,2,\ldots,n\}\subset \ZR^d$ be a set of unit vectors. Call parallelepiped in $ \ZR^d$ a set of the form 
	\begin{equation}\label{a9}
		R=\left\{\x\in \ZR^d:\,\x=t_1\u_1+\ldots+t_n\u_n,\, t_j\in [-r_j,r_j] \right\}.
	\end{equation} 
The family of all parallelepipeds \e{a9} generated by a fixed set of vectors $\ZU$ will be denoted by $\ZP_\ZU$.
\end{definition}
Note that parallepipeds can have different representations \e{a9}. Clearly the arithmetic sum of two parallelepipeds $R$, $Q$
\begin{equation*}
	R+Q=\{\x+\t:\,\x\in R,\,t\in Q\}
\end{equation*}
 is again a parallelepiped. For two parallelepipeds $R$ and $Q$ we write $Q\prec R$ if there is a parallelepiped $R'$ such that $Q=R+R'$. 
\begin{lemma}\label{L1}
	If $\ZU=\{\u_k:\,k=1,2,\ldots,n\}$ is a basis set of vectors in $\ZR^n$ and $R\in \ZP_\ZU$ has a representation \e{a9}, then 
	\begin{equation}\label{a12}
		\{\x\in \ZR^n:\, |\x|\le 1\}\subset \frac{C(\ZU)}{\min_jr_j}\cdot R,
	\end{equation}
where $C(\ZU)$ is a constant, depending only on the set of vectors $\ZU$.
\end{lemma}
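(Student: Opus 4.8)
The plan is to coordinatize with respect to the basis $\ZU$ and reduce the statement to a bound on a single fixed linear map. Let $T\colon\ZR^n\to\ZR^n$ be the linear isomorphism $Tc=\sum_{j=1}^n c_j\u_j$, which is invertible precisely because $\ZU$ is a basis. The representation \eqref{a9} says exactly that $R=T(Q)$ with $Q=[-r_1,r_1]\times\cdots\times[-r_n,r_n]$, and consequently, writing $m=\min_j r_j$,
\begin{equation*}
\frac{C}{m}\cdot R=T\Bigl(\prod_{j=1}^n[-\rho_j,\rho_j]\Bigr),\qquad \rho_j:=\frac{Cr_j}{m}\ge C,
\end{equation*}
where the inequality $\rho_j\ge C$ holds because $r_j\ge m$.

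Next I would estimate the coordinates, with respect to $\ZU$, of an arbitrary point of the unit ball. If $|\x|\le 1$, set $c=(c_1,\dots,c_n)=T^{-1}\x$, so that $\x=\sum_j c_j\u_j$ and $|c_j|\le|c|=|T^{-1}\x|\le\|T^{-1}\|$, where $\|T^{-1}\|$ denotes the operator norm. Taking $C=C(\ZU):=\|T^{-1}\|$ --- a quantity that depends only on the vectors $\u_1,\dots,\u_n$ and not on the radii $r_j$ --- we obtain $|c_j|\le C\le\rho_j$ for every $j$, hence $c\in\prod_j[-\rho_j,\rho_j]$, and therefore $\x=Tc\in\frac{C}{m}\cdot R$. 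This is exactly \eqref{a12}.

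I do not expect a genuine obstacle; the argument is elementary linear algebra. The two points that warrant a moment of care are: first, that one should use the given representation \eqref{a9} to write $R$ as the $T$-image of the axis-parallel box $Q$, so that a dilate of $R$ is the $T$-image of the correspondingly dilated box (parallelepipeds may a priori admit several representations, though for a basis set $\ZU$ the representation is in fact unique, which also makes $m=\min_j r_j$ unambiguous); and second, that the resulting constant depends only on the directions $\ZU$, which is transparent since $\|T^{-1}\|$ is manufactured solely from $\u_1,\dots,\u_n$. Equivalently one could argue purely geometrically: $T([-1,1]^n)$ is a bounded neighbourhood of the origin, hence contains some ball $\{|\x|\le\rho(\ZU)\}$ with $\rho(\ZU)>0$, and then $C(\ZU)=1/\rho(\ZU)$ works; but the coordinate computation above is the most direct route.
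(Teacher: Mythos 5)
Your proof is correct and is essentially the paper's argument written in coordinates: the paper bounds the inradius of $R$ from below by $\min_j h_j$, where $h_j$ is the distance from the origin to the bounding hyperplane $\{t_j=\pm r_j\}$, and observes that $h_j/r_j$ depends only on $\ZU$, which is precisely your estimate $|c_j|\le\|T^{-1}\|\,|\x|$ viewed through the identity $h_j=r_j/\bigl|\text{($j$-th row of }T^{-1})\bigr|$. Both routes produce the containment \e{a12} with a constant manufactured solely from the directions $\u_1,\ldots,\u_n$, so no comparison beyond this is needed.
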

\begin{proof}
	For any $j=1,2,\ldots,n$ we consider hyperplanes $\Gamma_j^+$ and $\Gamma_j^-$ in $\ZR^n$ defined 
	\begin{align*}
		\Gamma_j^\pm=\{\x=t_1\u_1+\ldots+t_n\u_n:\, t_j=\pm r_j,\, t_i\in \ZR,\, i\neq j\}
	\end{align*} 
and let $S_j$ be the closed strip domain lying between the hyperplanes $\Gamma_j^\pm$. We have $R=\cap_jS_j$. Denote by $h_j$ the distance of the hyperplanes $\Gamma_j^+$ and $\Gamma_j^-$ from the origin. It is clear that
\begin{equation}\label{a13}
	\{\x\in \ZR^n:\, |\x|\le \min_jh_j\}\subset R.
\end{equation}
One can also check that $c_j=h_j/r_j$ are constants, depending only on $\ZU$. Denote $C(\ZU)=(\min_jc_j)^{-1}$. From \e{a13} we obtain
\begin{equation*}
	\{\x\in \ZR^n:\, |\x|\le 1\}\subset \frac{1}{\min_jh_j}\cdot R\subset   \frac{C(\ZU)}{\min_jr_j}\cdot R
\end{equation*}
and so \e{a12}.
\end{proof}
A version of the following lemma in the case of $d=2$ was proved by Guzm\'an-Welland in \cite{GuWe} (see also \cite{Guz2}, chap. 6, Lemma 2.1). 
\begin{lemma}[Guzm\'an-Welland]\label{L2}
	Let $\ZU=\{\u_k:\, k=1,2,\ldots,n\}$ be a set of unit vectors in $\ZR^d$. Then for any parallelepiped $R\in \ZP_\ZU$ there exist a subset $\ZV\subset \ZU$ of independent vectors and a parallelepiped $Q\in \ZP_\ZV$ such that 
	\begin{align}
	&\rank(\ZV)=\rank(\ZU),\label{a26}\\
	&Q\prec R,\label{a27}\\
	&R\subset C(\ZU)\cdot Q,\label{a28}
	\end{align}
where $C(\ZU)$ is a constant depending only on the set of vectors $\ZU$.
\end{lemma}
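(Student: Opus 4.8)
The plan is to proceed by induction on $n$, the number of vectors in $\ZU$. If $n=\rank(\ZU)$ the vectors are already independent and we take $\ZV=\ZU$, $Q=R$, and $C(\ZU)=1$; conditions \e{a26}--\e{a28} hold trivially. So assume $n>\rank(\ZU)$, which means there is a linear dependence among the $\u_k$. After relabeling we may assume $\u_n=\sum_{j=1}^{n-1}\alpha_j\u_j$ for scalars $\alpha_j$ not all zero, and that $\u_1,\ldots,\u_{n-1}$ still span $\sp(\ZU)$, so $\rank(\ZU')=\rank(\ZU)$ where $\ZU'=\{\u_1,\ldots,\u_{n-1}\}$. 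Given the parallelepiped $R=\{t_1\u_1+\ldots+t_n\u_n:\,t_j\in[-r_j,r_j]\}$, the key geometric step is to absorb the $t_n\u_n$ term into the others: for $\t\in R$ we have $\sum_{j=1}^{n-1}t_j\u_j+t_n\u_n=\sum_{j=1}^{n-1}(t_j+t_n\alpha_j)\u_j$, and since $|t_j+t_n\alpha_j|\le r_j+|\alpha_j|r_n$, we get
\begin{equation}
R\subset R^\sharp:=\Bigl\{\textstyle\sum_{j=1}^{n-1}s_j\u_j:\,s_j\in[-(r_j+|\alpha_j|r_n),\,r_j+|\alpha_j|r_n]\Bigr\}\in\ZP_{\ZU'}.
\end{equation}
Moreover $R^\sharp\prec R$: indeed $R^\sharp=R+R''$ where $R''=\{\sum_{j=1}^{n-1}s_j\u_j:\,|s_j|\le|\alpha_j|r_n\}\in\ZP_{\ZU'}\subset\ZP_\ZU$ (one may also add a degenerate $t_n$-interval $\{0\}$ to view $R''$ as living in $\ZP_\ZU$, but it is cleaner to note $\ZP_{\ZU'}\subset\ZP_\ZU$), so $R=R^\sharp$ minus that extra box in the sense required by $\prec$. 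One checks directly from the definitions that $R^\sharp=R+R''$, hence $R^\sharp\prec R$.

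Now apply the induction hypothesis to the set $\ZU'$ of $n-1$ unit vectors and the parallelepiped $R^\sharp\in\ZP_{\ZU'}$: there exist an independent subset $\ZV\subset\ZU'\subset\ZU$ with $\rank(\ZV)=\rank(\ZU')=\rank(\ZU)$, giving \e{a26}, and a parallelepiped $Q\in\ZP_\ZV$ with $Q\prec R^\sharp$ and $R^\sharp\subset C(\ZU')\cdot Q$. Transitivity of $\prec$ (which follows from associativity of parallelepiped addition: if $Q=R^\sharp+P_1$ and $R^\sharp=R+P_2$ then $Q=R+(P_1+P_2)$) yields $Q\prec R$, which is \e{a27}. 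Finally $R\subset R^\sharp\subset C(\ZU')\cdot Q$, so \e{a28} holds with $C(\ZU):=C(\ZU')$; tracking the relabeling and the finitely many coefficients $\alpha_j$ (which depend only on $\ZU$), the constant depends only on $\ZU$.

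The part needing the most care is the bookkeeping around the relation $\prec$ and the ambient family of parallelepipeds: one must make sure that $R^\sharp$, $R''$, and $Q$ are all legitimately parallelepipeds generated by subsets of the original vector set $\ZU$ (the definitions allow any representation \e{a9}, and $\ZP_{\ZU'}\subseteq\ZP_\ZU$ since a missing direction just corresponds to the interval $[-0,0]$), and that the dilation constant is genuinely a function of $\ZU$ alone. I expect this, rather than the geometry of the absorption step, to be the main obstacle — the inclusion $R\subset R^\sharp$ and the identity $R^\sharp=R+R''$ are immediate from the triangle inequality on coefficients, but phrasing everything consistently with the paper's conventions for $\prec$ requires attention. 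A secondary point is ensuring that at each inductive step we can choose the eliminated vector $\u_n$ so that the remaining $n-1$ vectors still realize the full rank; this is possible precisely because $n>\rank(\ZU)$ forces a dependence, and any vector appearing with a nonzero coefficient in such a dependence can be dropped without lowering the rank.
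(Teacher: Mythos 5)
Your reduction step breaks down at \e{a27}, and the failure is structural, not just bookkeeping. First, the claimed identity $R^\sharp=R+R''$ is false: a point of $R+R''$ has $\u_j$-coefficient $t_j+s_j+t_n\alpha_j$ with $|t_j|\le r_j$, $|s_j|\le|\alpha_j|r_n$, $|t_n|\le r_n$, which can reach $r_j+2|\alpha_j|r_n$, so $R+R''$ strictly contains $R^\sharp$ in general. More seriously, the relation $\prec$ that the lemma actually needs (see how it is used in the proof of Lemma \ref{L3}: ``Since $Q\prec R$, we have $R=Q+H$'') is that $Q$ be a Minkowski \emph{summand} of $R$, so that $\mu_R=\mu_Q\ast\mu_H$ and $\|f_R\|_\infty\le|Q|^{-1}$; the displayed definition in the paper has $Q$ and $R$ transposed relative to this usage, and you have followed the transposed version. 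Your $Q$ is produced as a summand of the \emph{enlarged} box $R^\sharp\supseteq R$, and a summand of a larger body need not be a summand of the smaller one. Concretely, take $d=2$, $\u_1=e_1$, $\u_2=e_2$, $\u_3=(e_1+e_2)/\sqrt2$, all $r_j=1$: then $R$ is a proper hexagon, $R^\sharp$ (and hence your $Q$) is the circumscribing rectangle, and no parallelepiped $H$ satisfies $R=Q+H$, since $Q+H\supseteq Q\supsetneq R$ and Minkowski addition cannot delete the edges of $R$ in the direction $e_1+e_2$. So \e{a27} fails for your construction, and with it the density estimate \e{a32} in Lemma \ref{L3}.

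The natural repair --- drop the dependent direction rather than absorb it, i.e.\ take $\tilde R=\{\sum_{j<n}t_j\u_j:\,|t_j|\le r_j\}$ so that $R=\tilde R+\{t_n\u_n:\,|t_n|\le r_n\}$ and the summand property survives the induction --- runs into a second gap: to recover $R\subset C(\ZU)\cdot Q$ you must absorb the discarded segment of half-length $r_n$ in the direction $\u_n=\sum_j\alpha_j\u_j$ into a dilate of $\tilde R$, which forces $C\gtrsim r_n|\alpha_j|/r_j$ and blows up when $r_n$ is much larger than the radii $r_j$ appearing in the dependence. Your relabelling chooses which vector to eliminate from the linear-algebraic dependence alone and never looks at the radii, so the resulting constant cannot depend only on $\ZU$. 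This is exactly where the paper's proof does its real work: it orders $r_1\ge\cdots\ge r_n$, selects $\ZV$ greedily so that every discarded vector lies in the span of \emph{earlier} (hence larger-radius) retained vectors, and then invokes Lemma \ref{L1} to absorb each discarded block at the cost of a constant depending only on $\ZU$. Some version of that ordering argument must appear in any correct proof; your induction as written omits it.
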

\begin{proof}
	Suppose that $R\in \ZP_\ZU$ is the parallelepiped \e{a9}. Without loss of generality we can suppose that 
	\begin{equation}\label{a8}
		r_1\ge r_2\ge \ldots\ge r_n.
	\end{equation}
Denote
\begin{equation*}
	\ZV=\{\u_k:\,\u_k\not\in \sp\{\u_1,\ldots,\u_{k-1}\}\}\subset \ZU.
\end{equation*}
One can easily check that the vectors of  $\ZV$ are independent and $\rank(\ZV)=\rank(\ZU)$. One can split the set of vectors $\ZU$ into groups 
	\begin{align}
		&\ZU_j=\{\u_k:\, k\in (k_{j-1},k_{j}]\}, \quad j=1,2,\ldots,s,\\
		&0=k_0<k_1<\ldots<k_s=n,
	\end{align}
	such that
	\begin{align*}
		\ZV=\bigcup_{i\ge 0}\ZU_{2i+1},\quad \ZU_{2j}\subset \sp\left(\bigcup_{i=1}^{j}\ZU_{2i-1}\right).
	\end{align*}
Considering the parallelepipeds
\begin{equation*}
	R_j=\left\{\x\in \ZR^d:\,\x=\sum_{k=k_{j-1}+1}^{k_j}t_k\u_k,\, t_k\in [-r_k,r_k] \right\}\in \ZP_{\ZU_j},
\end{equation*}
we can write
\begin{equation*}
R=R_1+R_2+\ldots+R_s.
\end{equation*} 
Then the parallelepiped 
\begin{equation*}
	Q=\sum_{j:\,2j-1\le s}R_{2j-1}
\end{equation*}
satisfies \e{a26} and \e{a27}. If $\x\in R_{2j}$, then 
\begin{equation}\label{a50}	
	|\x|\le \sum_{i=k_{2j-1}+1}^{k_{2j}} r_j\le nr_{k_{2j-1}}.
\end{equation}
Let $Y_j$ be the subspace of $\ZR^d$ generated by the independent vectors $\cup_{i\le j}\ZU_{2i-1}$.  One can check
\begin{equation*}
	R_i\subset Y_j,\quad i=1,2,\ldots,2j.
\end{equation*}
Thus, applying \lem{L1} for the space $Y_j$, as well as \e{a8}, \e{a50}, we conclude
\begin{align}
	\frac{1}{nr_{k_{2j-1}}}R_{2j}\subset \{\x\in Y_j:\, |\x|\le 1\}&\subset \frac{C(\ZU)}{r_{k_{2j-1}}}(R_1+R_3+\ldots+R_{2j-1})\\
	&\subset \frac{C(\ZU)}{r_{k_{2j-1}}}\cdot Q
\end{align}
Thus we get $R_{2j}\subset nC(\ZU)\cdot Q$ and therefore
\begin{equation*}
	R\subset n^2C(\ZU)Q.
\end{equation*}
This gives us \e{a28}, completing the proof of lemma.
\end{proof}

Given set of unit vectors $\ZU=\{\u_k:\, k=1,2,\ldots,n\}\subset \ZR^d$. Let $\ZR_\ZU$ the subspace of $\ZR^d$ generated by the vectors $\ZU$. We associate with a parallelepiped \e{a9} a probability measure $\mu_R$ supported on $R$ as follows. First, for each $j$ we consider a probability measure $\mu_j$ uniformly distributed on the one dimensional parallelepiped $\{t\u_j:\, t\in [-r_j,r_j]\}$. The  convolution of singular measures $\mu_{j}$ is the measure $\mu_R$ defined on the Lebesgue measurable sets of $E\subset \ZR_\ZU$ by
\begin{equation}\label{a15}
	\mu_R(E)=\int_{\ZR_\ZU}\ldots \int_{\ZR_\ZU}\ZI_{E}(\v_1+\ldots+\v_n)d\mu_1(\v_1)\ldots d\mu_n(\v_n).
\end{equation}
One can check that $\mu_R$ is well-defined for any Lebesgue measurable set $E\subset \ZR_\ZU$. Denote by $f_R$ the density function of measure $\mu_R$ with respect to the Lebesgue measure on $\ZR_\ZU$. Observe that if $\ZU$ is independent, then 
\begin{align}
	f_R(\x)=\left\{\begin{array}{rrl}
		|R|^{-1}&\text { if }&\x\in R,\\
		0&\text{ if }&\x\in \ZR_\ZU\setminus  R.\label{a29}
	\end{array}
	\right.
\end{align}
\begin{lemma}\label{L3}
	Let $\ZU\subset \ZR^d$ be a set of arbitrary unit vectors and $R\in \ZP_\ZU$. Then there exists a set of independent vectors $\ZV\subset \ZU$ such that $\rank(\ZV)=\rank (\ZU)$ and there is a parallelepiped $R'\in \ZP_\ZV$ such that
	\begin{equation}\label{a10}
		\mu_R\le C(\ZU)\cdot \mu_{R'}.
	\end{equation}
\end{lemma}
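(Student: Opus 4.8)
The plan is to build $\ZV$ and $R'$ by the same grouping procedure used in \lem{L2}, and then to show that passing from $\mu_R$ to $\mu_{R'}$ only costs a constant depending on $\ZU$. First I would write $R=R_1+\ldots+R_s$ as in the proof of \lem{L2}, with $\ZV=\bigcup_i\ZU_{2i+1}$ the independent subset of full rank, and $R'=Q=\sum_{2j-1\le s}R_{2j-1}\in\ZP_\ZV$. Since $\mu_R$ is the convolution of the one-dimensional uniform measures $\mu_j$ along the directions $\u_j$, it factors as $\mu_R=\mu_{R_1}\ast\cdots\ast\mu_{R_s}$, where $\mu_{R_j}$ is the analogous measure for the group $\ZU_j$; likewise $\mu_{R'}=\ast_{2j-1\le s}\mu_{R_{2j-1}}$. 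Thus it suffices to absorb each even-indexed factor $\mu_{R_{2j}}$ into the odd-indexed ones.

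The key step is therefore a pointwise (or measure) comparison: for each even group $2j$, I would show $\mu_R\le C(\ZU)\,\mu_{R'}\ast(\text{something harmless})$, or more directly that convolving by $\mu_{R_{2j}}$ is dominated, after the convolution with the odd groups is already in place, by a bounded multiple of the density $f_{R'}$. The geometric input is \e{a50}: every point of $R_{2j}$ has norm at most $n r_{k_{2j-1}}$, and $R_{2j}$ lies in the subspace $Y_j=\sp(\bigcup_{i\le j}\ZU_{2i-1})$, on which $R_1+R_3+\ldots+R_{2j-1}$ already contains a ball of radius $c\, r_{k_{2j-1}}$ (this is exactly the \lem{L1} computation carried out inside the proof of \lem{L2}). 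Since the density $f_{R_1+\ldots+R_{2j-1}}$ of the convolution of the odd measures up to level $2j-1$ is, by the one-dimensional estimates, comparable on that ball to its supremum — here one uses that convolution of uniform measures on independent segments gives a density which is, up to constants, $\asymp |R'|^{-1}$ times a bump supported on $R'$ and bounded below by a constant on a fixed dilate-down of $R'$ — convolving further against the probability measure $\mu_{R_{2j}}$, whose support sits inside that ball translated, only spreads mass a bounded amount and can be dominated by $C(\ZU) f_{R'}$. Iterating over $j$ (that is, inserting the even groups one at a time into the convolution, each at the cost of a constant) yields $\mu_R\le C(\ZU)\,\mu_{R'}$ with $C(\ZU)$ depending only on the vectors.

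I expect the main obstacle to be the last comparison made rigorous, namely controlling the density of a convolution of finitely many one-dimensional uniform measures along independent directions from below on a full-dimensional ball inside the parallelepiped, uniformly in the radii $r_j$. The clean way to see this is by an affine change of coordinates on $\ZR_\ZV$ sending $\ZV$ to the standard basis: then $\mu_{R'}$ becomes (up to the Jacobian, a constant in $\ZU$) normalized Lebesgue measure on a coordinate box, its density is literally $|R'|^{-1}\ZI_{R'}$ by \e{a29}, and the even factors $\mu_{R_{2j}}$ become probability measures supported, by \e{a50} and \lem{L1}, in a box of comparable proportions to the corresponding sub-box of $R'$; convolving a box's normalized indicator against such a measure is bounded by a constant multiple of the normalized indicator of a bounded dilate, and bounded dilates of $R'$ are reabsorbed into $C(\ZU)$. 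Everything else — the factorization of $\mu_R$ over the groups, the choice of $\ZV$, $\rank(\ZV)=\rank(\ZU)$ — is already available from the construction in \lem{L2}.
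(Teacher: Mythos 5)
Your proposal is correct and takes essentially the same route as the paper: apply \lem{L2} to get the independent set $\ZV$ and the parallelepiped $Q$ with $Q\prec R$ and $R\subset C(\ZU)\cdot Q$, then combine the fact that convolution with a probability measure does not increase the sup-norm of a density with the exact formula \e{a29} for $f_{R'}$, taking $R'$ to be a bounded dilate of $Q$. The paper does this in a single step (writing $R=Q+H$ and concluding $\|f_R\|_\infty\le\|f_Q\|_\infty=|Q|^{-1}$) rather than inserting the even groups one at a time, and the lower-bound concerns in your middle paragraph are superfluous since $f_{R'}$ equals $|R'|^{-1}\ZI_{R'}$ exactly.
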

\begin{proof}
	 Applying \lem{L2} in the Euclidean space $\ZR_\ZU$, we find a set of independent vectors $\ZV\subset \ZU$, $\rank(\ZV)=\rank (\ZU)$ and a parallelepiped $Q\in \ZP_\ZV$ satisfying the conditions of lemma.  
	 Since $Q\prec R$, we have $R=Q+H$ for some parallelepiped $H$ in $\ZR_\ZU$. We can write
	\begin{align}
		\mu_R(E)&=\int_{\ZR_\ZU}\int_{\ZR_\ZU}\ZI_{E}(\v+\v')d\mu_Q(\v)d\mu_H(\v')\\
		&=\int_{\ZR_\ZU}\int_{\ZR_\ZU}\ZI_{E}(\v+\v')f_Q(\v)d\v d\mu_H(\v')\\
		&=\frac{1}{|Q|}\int_{\ZR_\ZU}\int_{\ZR_\ZU}\ZI_{E}(\v+\v')\ZI_Q(\v)d\v d\mu_H(\v')\\
		&\le \frac{|E|}{|Q|}.
	\end{align} 
	This clearly  implies 
	\begin{equation}\label{a32}
		\|f_R\|_\infty \le \|f_Q\|_\infty= |Q|^{-1}.
	\end{equation} 
	Denote $R'=C(\ZU)Q$, where $C(\ZU)$ is the constant in \e{a28}. From \e{a28} and \e{a29} we have
	\begin{align}
		&R\subset R',\label{a30}\\
		&\|f_{R'}\|_\infty=|R'|^{-1}=\left(C(\ZU)|Q|\right)^{-1}.\label{a31}
	\end{align}
	Combining \e{a32}  and \e{a31} we get the pointwise bound $	f_R\le C(\ZU)f_{R'}$, which implies \e{a10}.
\end{proof}
\begin{proof}[Proof of \trm{T1}]
Observe that the integral in \e{a14} may be written as a convolution of measure \e{a15} with the function $f$. Namely, we have
\begin{align}
	\frac{1}{2^nr_1\ldots r_n}\int_{-r_1}^{r_1}\ldots \int_{-r_n}^{r_n}&|f(\x+t_1\u_1+\ldots+t_n\u_n)|dt_1\ldots dt_n\\
	&=\int_{\ZR^d}|f(\x+\v)|d\mu_R(\v).\label{a17}
\end{align}
Applying \lem{L3}, for any parallelepiped $R\in \ZP_\ZU$  we find an independent vector set $\ZV\subset \ZU$ with $\rank(\ZV)=\rank (\ZU)$ and a parallelepiped $R'\in \ZP_\ZV$ such that \e{a10} holds. Thus the last integral in \e{a17} may be estimated as follows:
\begin{equation}
	\int_{\ZR^d}f(\x+\v)d\mu_R(\v)\le C(\ZU)\int_{\ZR^d}f(\x+\v)d\mu_{R'}(\v)\le C(\ZU)M_\ZV f(\x).
\end{equation}
This implies
\begin{equation*}
M_\ZU f(\x)\le C(\ZU)\max_\ZV M_\ZV f(\x),
\end{equation*}
where the maximum is taken over all the subsets $\ZV\subset \ZU$ of independent vectors such that $\rank(\ZV)=\rank(\ZU)$. For each such $\ZV$ the operator $M_\ZV$ satisfies the bound \e{a16} and the number of all collections $\ZV$ is constant, depending only on $n$ and so on $\ZU$. Thus we get \e{a16}.
\end{proof}
\section{A discrete maximal inequality}
We will need a discrete version of inequality \e{a16}. Let $\phi:\ZZ^d\to \ZR$ be a $d$-dimensional sequence and let $A=\{a_{kj}:\, 1\le j\le n,\, 1\le k\le d\}$ be an integer matrix. Consider the maximal operator
\begin{align}
	\ZD_A\phi(\n)&=\sup_{s_j\in \ZN}\frac{1}{s_1\ldots s_n}\sum_{k_1=0}^{s_1-1}\ldots\sum_{k_n=0}^{s_n-1}\phi(\n+A\cdot \k)\\
	&=\sup_{s_j\in \ZN}\frac{1}{s_1\ldots s_n}\sum_{\k=0}^{\s-1}\phi(\n+A\cdot \k),\quad \n\in \ZN^d.
\end{align}
From \trm{T1} we easily obtain the following.
\begin{corollary}\label{C2}
	For any integer matrix $A$ of $\rank(A)=r$ it holds the bound
	\begin{equation}
		\#\{\n\in \ZZ^d:\,  \ZD_A\phi(\n)>\lambda\}\le C(A)\sum_{\n\in \ZZ^d}\Log_{r-1}\left(\frac{|\phi(\n)|}{\lambda}\right).
	\end{equation}
\end{corollary}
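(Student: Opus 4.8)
The argument is a transference of \trm{T1} from $\ZR^d$ to the lattice $\ZZ^d$, in the spirit of Calder\'on. I build from $\phi$ a nonnegative step function on $\ZR^d$ and feed it to \trm{T1}; the only twist is that an integer matrix $A$ need not carry a unit cube of $\ZR^n$ into a single unit cube of $\ZR^d$ -- it smears each such cube across $O_A(1)$ lattice cubes -- so one first passes to a slightly thickened sequence. Since $\ZD_A\phi\le\ZD_A|\phi|$ pointwise, I may assume $\phi\ge0$ (replacing $\phi$ by $|\phi|$ at the very end), and if $\sum_{\m}\Log_{r-1}(\phi(\m)/\lambda)=\infty$ there is nothing to prove, so I may also assume it finite. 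Write $\lfloor\cdot\rfloor$ for the coordinatewise floor on $\ZR^d$ and fix an integer $\rho=\rho(A)\ge1$ with $\|\boldsymbol\theta+A\boldsymbol\tau\|_\infty<\rho$ for all $\boldsymbol\theta\in[0,1)^d$, $\boldsymbol\tau\in[0,1)^n$ (possible with $\rho$ depending only on $A$). Put $\widetilde\phi(\m)=\max\{\phi(\mathbf p):\mathbf p\in\ZZ^d,\ \|\mathbf p-\m\|_\infty\le\rho\}$ and $\widetilde f=\sum_{\m\in\ZZ^d}\widetilde\phi(\m)\,\ZI_{Q_\m}$ with $Q_\m=\m+[0,1)^d$, so that $\widetilde f\ge0$ and $\widetilde f(\mathbf y)=\widetilde\phi(\lfloor\mathbf y\rfloor)$.

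The heart of the proof is the pointwise estimate $\ZD_A\phi(\n)\le 2^{n}M_A\widetilde f(\x)$, valid for every $\n\in\ZZ^d$ and every $\x\in Q_\n$. To see it, let $\x=\n+\boldsymbol\theta$ and $\t=\k+\boldsymbol\tau$ with $\boldsymbol\theta\in[0,1)^d$, $\k\in\ZZ^n$, $\boldsymbol\tau\in[0,1)^n$; since $A$ is integral, $\lfloor\x+A\t\rfloor=\n+A\k+\lfloor\boldsymbol\theta+A\boldsymbol\tau\rfloor$ with $\|\lfloor\boldsymbol\theta+A\boldsymbol\tau\rfloor\|_\infty\le\rho$, whence $\widetilde f(\x+A\t)=\widetilde\phi(\lfloor\x+A\t\rfloor)\ge\phi(\n+A\k)$. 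Integrating this in $\t$ over $\prod_j[0,s_j)$ gives $\sum_{0\le k_j<s_j}\phi(\n+A\k)\le\int_{\prod_j[0,s_j)}\widetilde f(\x+A\t)\,d\t$; enlarging the domain of integration to the symmetric interval $\prod_j[-s_j,s_j]$, which is admissible in $M_A$ (take $r_j=s_j$) and has volume $2^n\prod_j s_j$, and dividing by $s_1\cdots s_n$, yields $\frac{1}{s_1\cdots s_n}\sum_{0\le k_j<s_j}\phi(\n+A\k)\le 2^nM_A\widetilde f(\x)$; now take the supremum over $s_1,\dots,s_n\in\ZN$.

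Consequently, if $\ZD_A\phi(\n)>\lambda$ then $Q_\n\subset\{M_A\widetilde f>2^{-n}\lambda\}$, and since the cubes $Q_\n$ ($\n\in\ZZ^d$) are pairwise disjoint of volume $1$ we get $\#\{\n\in\ZZ^d:\ZD_A\phi(\n)>\lambda\}\le|\{M_A\widetilde f>2^{-n}\lambda\}|$. By \trm{T1} with $r=\rank A$ -- which applies since, as the estimate below shows, $\widetilde f\in L(\log^+L)^{r-1}(\ZR^d)$ -- this is at most $C(A)\int_{\ZR^d}\Log_{r-1}(2^n\widetilde f/\lambda)=C(A)\sum_{\m\in\ZZ^d}\Log_{r-1}(2^n\widetilde\phi(\m)/\lambda)$. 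To undo the thickening, note $\Log_{r-1}(2^nt)\le c(n,r)\,\Log_{r-1}(t)$ for all $t\ge0$ by an elementary growth estimate, and that $\Log_{r-1}$ is nonnegative and nondecreasing, so $\Log_{r-1}(\widetilde\phi(\m)/\lambda)\le\sum_{\|\mathbf p-\m\|_\infty\le\rho}\Log_{r-1}(\phi(\mathbf p)/\lambda)$; summing in $\m$ replaces the right-hand side by at most $(2\rho+1)^d\sum_{\mathbf p}\Log_{r-1}(\phi(\mathbf p)/\lambda)$. Since $n$ and $\rho$ depend only on $A$, collecting the constants gives $\#\{\n\in\ZZ^d:\ZD_A\phi(\n)>\lambda\}\le C(A)\sum_{\m\in\ZZ^d}\Log_{r-1}(|\phi(\m)|/\lambda)$, as required.

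The one genuinely delicate point is the bookkeeping around the thickening radius $\rho$: it must be large enough that $M_A$ of the step function $\widetilde f$ really dominates the lattice averages of $\phi$ (this is precisely the effect of $A$ smearing cubes), yet the passage $\phi\mapsto\widetilde\phi$ must cost only a dimensional constant in the $L\log^{r-1}$ functional. Everything else -- the two step-function computations and the elementary estimate for $\Log_{r-1}$ -- is routine; in particular the degenerate case $\rank A<d$ needs no special treatment here, being already absorbed into \trm{T1}.
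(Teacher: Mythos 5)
Your proof is correct and follows essentially the same transference strategy as the paper: replace $\phi$ by a thickened version, build the associated step function on $\ZR^d$, show it pointwise dominates the lattice averages (using that the integer matrix $A$ smears a unit cube over only $O_A(1)$ lattice cubes), apply \trm{T1}, and convert the integral back to a sum at the cost of an $A$-dependent constant. The only cosmetic difference is that the paper thickens by summing $\phi$ over $\{0,\pm 1\}$-shifts and shrinks the integration cube to $[0,\delta)^n$ so the image stays within distance $1$, whereas you keep unit cubes and take a maximum over a radius-$\rho(A)$ neighborhood.
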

\begin{proof}
Given multiple sequence $\phi(\m)$ consider the function
\begin{equation}\label{a52}
	f(\x)=	\sum_{\varepsilon_j=0,1,-1}\phi(m_1+\varepsilon_1,\ldots,m_n+\varepsilon_n),\text { if } [\x]=\m,\quad \m\in \ZZ_d,
\end{equation}
on $\ZR^d$, where $[\x]=([x_1],\ldots,[x_d])$ denotes the coordinate wise integer part of the vector $\x=(x_1,\ldots,x_d)$.  Clearly there is a constant $\delta=\delta(A)<1$ such that 
\begin{equation}\label{a53}
	A\left(\Delta\right)\subset (-1,1)^d,\text{ where } \Delta=[0,\delta)^n,
\end{equation}
Using \e{a52}, \e{a53}, one can check that
\begin{equation}
	\phi(\n+A\cdot \k)\le f(\x+A\cdot \t)\text { if } \t\in \k+\Delta,\, [\x]=\n.
\end{equation}
Thus we obtain
\begin{align}
	\sum_{\k=0}^{\s-1}\phi(\n+A\cdot \k)&\le \sum_{\k=0}^{\s-1}\frac{1}{|\Delta|}\int_{\k+\Delta}|f(\x+A\cdot \t)|d\t\\
	&\le \frac{1}{|\Delta|}\int_R|f(\x+A\cdot \t)|d\t,
\end{align}
for any $\x$ with $[\x]=\n$, where
\begin{equation}
	R=\{\t\in \ZR^n:\,t_j\in [-1,s_j] ,\, j=1,\ldots,n\}.
\end{equation}
This implies
\begin{equation}
	\ZD_A\phi(\n)\le C(A)M_Af(\x)\text { if } [\x]=\n\in \ZZ^d
\end{equation}
and so
\begin{align}
		\#\{\n\in \ZZ_+^d:\,  \ZD_A\phi(\n)>\lambda\}&\le 	|\{\x\in \ZR^d:\,  M_Af(\x)>\lambda/C(A)\}|\\
		&\le C(A)\int_{\ZR^d}\Log_{r-1}\left(\frac{|f|}{\lambda}\right)\\
		&\le C(A)\sum_{\n\in \ZZ^d}\Log_{r-1}\left(\frac{|\phi(\n)|}{\lambda}\right).
\end{align}
This completes the proof.
\end{proof}
\section{Proofs of Theorems \ref{T2} and \ref{T3}}

\begin{proof}[Proof of \trm{T2}]
	Since $\rank(\ZU)=d$, without loss of generality we can suppose that $U_1,\ldots,U_d$ are independent and 
	\begin{equation}\label{a62}
		U_k^{l_k}=U_1^{a_{1,k}}\circ\cdots\circ U_d^{a_{d,k}},\quad d<k\le n,
	\end{equation}
where $l_k\ge 1$ and $a_{j,k}$ are some integers. First we suppose that $l_k=1$. Thus we can write
\begin{align}
	&f\left((U_1^{k_1}\circ\cdots \circ U_n^{k_n})(x)\right)\\
	&\qquad\qquad=f\left((U_1^{k_1+a_{1,d+1}k_{d+1}+\cdots+a_{1,n}k_n}\circ\cdots \circ U_d^{k_d+a_{d,d+1}k_{d+1}+\cdots+a_{d,n}k_n})(x)\right)\\
	&\qquad\qquad=\phi(x,A\cdot \k),\label{a55}
\end{align}
where
\begin{align}
	&\phi(x,\n)=f\left((U_1^{n_1}\circ\cdots \circ U_d^{n_d})(x)\right),\\
	&\n=(n_1,\ldots,n_d)\in \ZZ^d,
\end{align}
and
\begin{equation*}
	A=\begin{pmatrix}
		1&0&\cdots&0&a_{1,d+1}&\cdots &a_{1,n}\\
		0&1&\cdots&0&a_{2,d+1}&\cdots &a_{2,n}\\
		\cdot &\cdot &\cdots&\cdot&\cdot &\ldots &\cdot\\
		0&0&\ldots&1&a_{d,d+1}&\ldots &a_{d,n}
	\end{pmatrix}
\end{equation*}
is a matrix of size $d\times n$. Let
\begin{equation}\label{a57}
f_M^*(x,\n)=\max_{1\le s_j\le M}\frac{1}{s_1\cdots s_n}\sum_{\k=0}^{\s-1}|\phi(x,\n+A\cdot \k)|,
\end{equation}
where $M\in \ZN$ and denote
\begin{align}
	&E_\lambda(x)=\{\n:\, 1\le n_j\le N:\, f_M^*(x,\n)>\lambda\},\\
	&E_\lambda (\n)=\{x:\, f_M^*(x,\n)>\lambda\},\quad \n\in \ZZ^d,\\
	&E_\lambda=\{(x,\n):\, 1\le n_j\le N,\,f_M^*(x,\n)>\lambda\}=\cup_{x\in X}E_\lambda(x)\\
	&\qquad\qquad\qquad\qquad\qquad\qquad\qquad\qquad\quad\quad\,=\cup_{1\le n_j\le N}E_\lambda(\n).\label{a23}
\end{align}
Taking into account \e{a55}, observe that inequality \e{a22} is the same as 
\begin{equation}\label{a54}
\lim_{M\to\infty}\mu(E_\lambda (\textbf{0}))\le C(U)\int_X\Log_{d-1}\left(\frac{|f|}{\lambda}\right).
\end{equation}
In \e{a57} the coordinates of $A\cdot \k$ may vary in the interval $[-R, R]$, where $R=R(A,M)$ is a constant depending only on the matrix $A$ and the integer $M$. From \cor{C2} it follows that
\begin{equation}
\#(E_\lambda (x))\le C(A)\sum_{1\le n_j\le N+R}\Log_{r-1}\left(\frac{|\phi(x,\n)|}{\lambda}\right)\text{ for all } x\in X.
\end{equation}
Then, since $U_k$ are measure-preserving, the sets $E_\lambda (\n)$ have equal measures for different $\n\in\ZZ^d$. Thus from \e{a23} we obtain
\begin{align}
	\mu(E_\lambda (\textbf{0}))&=\frac{1}{N^d}\sum_{1\le n_j\le N}\mu(E_\lambda (\textbf{\n}))=\frac{1}{N^d}\int_X\#\left(E_\lambda(x)\right)\\
	&\le \frac{C(A)}{N^d}\sum_{1\le n_j\le  N+R}\int_X\Log_{r-1}\left(\frac{|\phi(x,\n)|}{\lambda}\right)\\
	&=\frac{C(A)(N+R)^d}{N^d}\int_X\Log_{r-1}\left(\frac{|f|}{\lambda}\right).
\end{align}
Fixing $M$ and letting $N\to\infty$, we get
\begin{equation*}
	|E_\lambda (\textbf{0})|\le C(A)\int_X\Log_{r-1}\left(\frac{|f|}{\lambda}\right),
\end{equation*}
which implies \e{a54}. The general case $l_k\ge 1$ can be easily deduced from the case of $l_k=1$. Fix an integer vector $\zr=(r_{d+1},\ldots,r_n)$, $0\le r_j<l_j$, and denote by $Q_{s_1,\ldots,s_d}^\zr f(x)$ the sum of functions
\begin{equation}\label{a58}
\left|f\left(U_1^{k_1}\ldots U_n^{k_n} x\right)\right|,
\end{equation}
over the integer vectors $\k=(k_1,\ldots,k_n)$, satisfying 
\begin{align}
&1\le k_j<s_j,\quad 1<j\le n,\label{a60}\\
& k_j= \bar k_jl_j+r_j,\quad \bar k_j\in \ZZ,\quad  d<j\le n.\label{a59}
\end{align}
Under the conditions \e{a59} we can write
\begin{equation}\label{a61}
f\left(U_1^{k_1}\ldots U_n^{k_n} x	\right)=\bar f\left(U_1^{ k_1}\ldots U_d^{ k_d}\bar U_{d+1}^{\bar k_{d+1}}\ldots \bar U_{n}^{\bar k_{n}}x	\right)
\end{equation}
where
\begin{align*}
&\bar f(x)=f\left(U_{d+1}^{r_{d+1}}\ldots U_n^{r_n} x\right),\\
&\bar U_j= U_j^{l_j},\quad d<j\le n.
\end{align*}
From \e{a62} it follows that
\begin{equation}\label{a63}
\bar U_k=U_1^{a_{1,k}}\circ\ldots\circ U_d^{a_{d,k}},\quad d<k\le n,
\end{equation}
Denote by $\alpha(\s,\zr)$ the number of integer vectors $\k=(k_1,\ldots,k_n)$, satisfying \e{a60} and \e{a59}. 
According to \e{a61} and \e{a63} we can say that 
\begin{equation}\label{a64}
	\frac{Q_{s_1,\ldots,s_n}^\zr  f(x)}{\alpha(\s,\zr)}
\end{equation}
are certain ergodic averages, obeying the case of $l_k=1$ in \e{a62}. Thus we conclude that the averages \e{a64} satisfy weak estimate \e{a22} for all vectors $\zr$.
On the other hand, taking into account $\alpha(\s,\zr)\le s_1\ldots s_n$, we have
\begin{align*}
\frac{1}{s_1\ldots s_n}\sum_{k_1=0}^{s_1-1}\ldots &\sum_{k_n=0}^{s_n-1}\left|f\left((U_1^{k_1}\circ\ldots \circ U_n^{k_n})(x)\right)\right|\\
&=\frac{1}{s_1\ldots s_n}\sum_\zr Q_{s_1,\ldots,s_n}^\zr f(x)\\
&=\sum_\zr \frac{\alpha(\s,\zr)}{s_1\ldots s_n}\frac{Q_{s_1,\ldots,s_n}^\zr  f(x)}{\alpha(\s,\zr)}\\
&\le \sum_\zr\frac{Q_{s_1,\ldots,s_n}^\zr  f(x)}{\alpha(\s,\zr)}.
\end{align*}
Thus, since the averages \e{a64} satisfy the weak estimate \e{a22} and the number of different vectors $\zr=l_{d+1}\ldots l_n$ is a constant depending on $\ZU$ only, we obtain \e{a22} in full generality. Theorem is proved.
\end{proof}

	\begin{proof}[Proof of \trm{T3}]
		According to \trm{OT1} the averages \e{a25} converge a.e. for any function from $L\log^{n-1}L$ and so for any $f\in L^\infty(X)$.  	To prove convergence for any $f\in L\log^{d-1} L(\ZT)$, fix $\varepsilon > 0$ and choose a function $g \in L^{\infty}$ such that 
\begin{equation}
		\int_X\Log_{d-1}\left(\frac{|f-g|}{\varepsilon }\right)< \varepsilon.
\end{equation}
		Applying \e{a22}, for the averages  
		\begin{equation*}
		\mathrm{A}_{\m}(f)=	\frac{1}{m_1\ldots m_{n}}\sum_{j_1=0}^{m_1-1}\ldots\sum_{j_n=0}^{m_n-1}f\left(U_1^{j_1}\ldots U_n^{j_n} x	\right)
		\end{equation*}
we obtain
		\begin{align}
			&\mu\left\{x : \limsup _{\min n_j,\,\min m_j\rightarrow \infty} |	\mathrm{A}_{\n}(f)-\mathrm{A}_{\m}(f)| >2 \varepsilon\right\} \\
			&=\mu\left\{x : \limsup _{\min n_j,\,\min m_j\rightarrow \infty} |	\mathrm{A}_{\n}(f-g)-\mathrm{A}_{\m}(f-g)| >  2\varepsilon\right\} \\
			&\le \mu\left\{x : \sup _{\n }| \mathrm{A}_{\n}\left(f-g\right) \mid> \varepsilon\right\} \\
			&\le C(\ZU)\int_X\Log_{d-1}\left(\frac{|f-g|}{\varepsilon }\right)< C(\ZU)\varepsilon.
		\end{align}
		This implies a.e. convergence of $A_\n(f)$, completing the proof of the theorem.
	\end{proof}
\section{Sharpness in \trm{T3} and an extension}	
Let us show that the class of functions $L\log^{d-1}L(X)$ in \trm{T3} is optimal. So suppose the rank of $\ZU=\{U_k:\,k=1,2,\ldots, n\}$ is $d$ and $\{U_1,\ldots, U_d\}$ is the corresponding independent subset $\ZU$, which is moreover non-periodic. According to \trm{OT2} for $\Phi(t)=o(t\log^{d-1}t)$ there exists a function $f\in L_\Phi(X)$ with a.e. diverging averages 
	\begin{equation}\label{a69}
	\frac{1}{s_1\ldots s_{d}}\sum_{j_1=0}^{s_1-1}\ldots\sum_{j_d=0}^{s_d-1}f\left(U_1^{j_1}\ldots U_d^{j_d} x	\right)
\end{equation}
It turns out that for the same function $f$ we have a.e. divergence of the averages 
	\begin{equation}\label{a70}
	\frac{1}{s_1\ldots s_{n}}\sum_{j_1=0}^{s_1-1}\ldots\sum_{j_n=0}^{s_n-1}f\left(U_1^{j_1}\ldots U_n^{j_n} x	\right).
\end{equation}
This immediately follows from the following lemma.
\begin{lemma}
	Let $\ZU=\{U_k:\,k=1,2,\ldots, n\}$ be a set of measure-preserving transformations and $d\le n$.  If averages \e{a69}
diverge unboundedly a.e, then  extended averages \e{a70} also diverge unboundedly a.e. 
\end{lemma}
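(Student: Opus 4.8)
The plan is to observe that the $d$-fold averages \e{a69} form a sub-family of the $n$-fold averages \e{a70}, namely the one obtained by freezing $s_{d+1}=\cdots=s_n=1$; since ``unbounded a.e.\ divergence'' means precisely that the associated maximal function equals $+\infty$ almost everywhere, a pointwise domination of maximal functions will then give the conclusion immediately. For indices $s_j\ge 1$ I would write
\begin{equation*}
	B_{s_1,\ldots,s_n}f(x)=\frac{1}{s_1\cdots s_n}\sum_{j_1=0}^{s_1-1}\cdots\sum_{j_n=0}^{s_n-1}f\bigl(U_1^{j_1}\cdots U_n^{j_n}x\bigr)
\end{equation*}
for the $n$-fold average in \e{a70}, and let $C_{s_1,\ldots,s_d}f(x)$ denote the $d$-fold average in \e{a69}.

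The one substantive step is the elementary identity
\begin{equation*}
	B_{s_1,\ldots,s_d,1,\ldots,1}f(x)=C_{s_1,\ldots,s_d}f(x),\qquad s_1,\ldots,s_d\ge 1,\ x\in X.
\end{equation*}
Indeed, setting $s_{d+1}=\cdots=s_n=1$ collapses the last $n-d$ summations in \e{a70} to the single term $j_{d+1}=\cdots=j_n=0$, turns the normalizing product $s_1\cdots s_n$ into $s_1\cdots s_d$, and --- because $U_k^{0}$ is the identity map for every $k>d$ --- reduces the argument $U_1^{j_1}\cdots U_n^{j_n}x$ to $U_1^{j_1}\cdots U_d^{j_d}x$. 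I would simply unwind the definition of \e{a70} and verify these three reductions. This step uses nothing about $U_1,\ldots,U_n$ beyond the fact that they are maps of $X$ into itself --- in particular neither commutativity nor any independence hypothesis enters --- which matches the generality of the statement.

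Consequently, by the identity above, every $d$-fold average $C_{s_1,\ldots,s_d}f(x)$ appears among the $n$-fold averages, so for each $x\in X$
\begin{equation*}
	\sup_{s_1,\ldots,s_n\ge 1}\bigl|B_{s_1,\ldots,s_n}f(x)\bigr|\ \ge\ \sup_{s_1,\ldots,s_d\ge 1}\bigl|C_{s_1,\ldots,s_d}f(x)\bigr|.
\end{equation*}
By hypothesis the right-hand side equals $+\infty$ for a.e.\ $x$, hence so does the left-hand side; this is exactly the asserted unbounded a.e.\ divergence of \e{a70}. There is essentially no obstacle here: the whole argument is the collapsing identity of the second paragraph, and the only thing requiring care is the bookkeeping of the normalizing constant and of the composition $U_1^{j_1}\cdots U_n^{j_n}$ under the substitution $s_{d+1}=\cdots=s_n=1$. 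In particular the proof is purely structural and invokes none of the maximal inequalities \trm{T1}, \trm{T2} nor the convergence theorem \trm{T3}.
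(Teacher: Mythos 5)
Your collapsing identity $B_{s_1,\ldots,s_d,1,\ldots,1}f=C_{s_1,\ldots,s_d}f$ is correct as a pointwise identity, but the conclusion you draw from it does not prove the lemma. The divergence asserted in the statement (and assumed in the hypothesis) is divergence as $\min_j s_j\to\infty$: unbounded a.e.\ divergence of \e{a70} means that for every $p$ the supremum of the averages over the tail $\{\s\in\ZZ_+^n:\ s_j\ge p \text{ for all } j\}$ is infinite almost everywhere, not merely that the supremum over all $s_j\ge1$ is infinite. Your sub-family has $s_{d+1}=\cdots=s_n=1$, so it lies outside every tail $\{\min_j s_j\ge p\}$ with $p\ge2$ and gives no information about the limit process; a priori the $n$-fold averages could still converge as $\min_j s_j\to\infty$ even though their unrestricted supremum is infinite. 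This is a genuine gap, not a bookkeeping issue.

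The paper closes exactly this gap by not freezing the extra indices at $1$: writing $\bar\s=(s_1,\ldots,s_d,s_{d+1},\ldots,s_n)$ with $s_{d+1},\ldots,s_n\ge p$, it keeps only the terms with $j_{d+1}=\cdots=j_n=0$ (the summands being nonnegative) to obtain
\begin{equation*}
\bar A_{\bar\s}(f)\ \ge\ \frac{A_{\s}(f)}{s_{d+1}\cdots s_n},
\end{equation*}
and hence $\bar M_p(f)\ge p^{-(n-d)}M_p(f)=\infty$ a.e.\ for every $p$, where $M_p$ and $\bar M_p$ denote the suprema over the tails $s_j\ge p$. The harmless factor $p^{-(n-d)}$ is the price for staying inside the tail; your identity is precisely the case $s_{d+1}=\cdots=s_n=1$ of this inequality, which is the one case that cannot be used. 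If you replace your frozen indices $1$ by $p$ and phrase the argument in terms of the tail suprema, your proof becomes correct and essentially coincides with the paper's.
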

\begin{proof}
	Denote by $A_{\s}(f)$ and $ \bar A_{\s}(f)$ the averages \e{a69}  and \e{a70} respectively and consider the functions
	\begin{equation}
		M_{p}(f)=\max_{\s\in \ZZ_+^d,\, s_j\ge p}A_\s (f),\quad \bar M_{p}(f)=\max_{\s\in \ZZ_+^n,\, s_j\ge p}\bar A_\s (f).
	\end{equation}
	The unbounded divergence  of averages \e{a69} implies $M_p(f)=\infty$ a.e. for any $p>0$. If $\s=(s_1,\ldots,s_d)$ and $\bar \s=(s_1,\ldots, s_d,\ldots,s_n)$, then we have
	\begin{equation*}
	\bar A_{\bar \s} (f)\ge \frac{ A_{ \s} (f)}{s_{d+1}\ldots s_n},
	\end{equation*}
and thus, for any $p>0$
\begin{equation}
	\bar M_p(f)\ge \frac{1}{p^{n-d}}M_p(f)=\infty \text { a.e.}.
\end{equation}
\end{proof}

A set of real numbers 
\begin{equation}\label{Th}
	\Theta=\{\theta_1,\theta_2,\ldots,\theta_n\}
\end{equation}
is said to be \emph{dependent} (with respect to the rational numbers) if there are is a  non-trivial collection of integers $r_k$, $k=1,2,\ldots,n$, such that 
\begin{equation}
	r_1\theta_1+r_2\theta_2+\ldots+r_n\theta_n=0\mod 1,
\end{equation}
If there are no such integers, then we say that $\Theta$ is \emph{independent}.
The rank of a collection $\Theta=\{\theta_1,\theta_2,\ldots,\theta_n\}$ will be called the largest integer $d$, for which there is an independent subset of cardinality $d$ in $\ZU$. Consider the probability space of Lebesgue measure on $\ZT=\ZR/\ZZ$ with  modulo one addition. Applying \trm{T1} and the ergodicity of the rotation mapping $x\to x+\theta$ for an irrational $\theta$, we obtain

\begin{corollary}
If $\e{Th}$ is a sequence of rank $d$, then
	
	1) for any $f\in L\log^{d-1} L(\ZT)$ the limit below holds a.e. 
	\begin{equation}\label{a1}
		\lim_{\min\{s_k\}\to\infty }\frac{1}{s_1\cdots s_n}\sum_{k_1=0}^{s_1-1}\cdots\sum_{k_n=0}^{s_n-1}f(x+k_1\theta_1+\cdots+k_n\theta_n)=\int_\ZT f(x)dx,
	\end{equation}

2) for any increasing function $\Phi:\ZR^+\to\ZR^+$, satisfying $\Phi(t)=o(t(\log t)^{d-1})$, there exists a function $f\in L_\Phi(\ZT)$ such that the averages in \e{a1} are a.e. divergent as $\min\{s_k\}\to \infty$.
\end{corollary}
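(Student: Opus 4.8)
The plan is to deduce the corollary by specializing Theorems~\ref{T1}--\ref{T3} and \trm{OT2} to the measure–preserving system $(\ZT,\ZB,dx)$, where $\ZT=\ZR/\ZZ$ with Lebesgue measure and $U_k$ is the rotation $x\mapsto x+\theta_k$. The starting point is the elementary dictionary: the composition $U_1^{p_1}\circ\cdots\circ U_n^{p_n}$ is the rotation by $p_1\theta_1+\cdots+p_n\theta_n$, and this map coincides with the identity (everywhere, hence a.e.) exactly when $p_1\theta_1+\cdots+p_n\theta_n\equiv 0\pmod 1$. Consequently a subset of $\ZU=\{U_k\}$ is independent in the sense of the ICMP definition if and only if the corresponding subset of $\Theta$ is independent in the arithmetic sense; in particular $\rank(\ZU)=\rank(\Theta)=d$. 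We assume $d\ge 1$ (otherwise the statement is vacuous as phrased), so that any maximal independent subset of $\Theta$ contains an irrational number, i.e. an ergodic rotation.

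For part~1) I would first invoke \trm{T3}, which already yields a.e. convergence of the averages in \e{a1} for every $f\in L\log^{d-1}L(\ZT)$; it then remains to identify the limit. After relabelling the $\theta_k$, assume $\{\theta_1,\dots,\theta_d\}$ is independent, so $\theta_1$ is irrational and $U_1$ is ergodic. For $f\in L^\infty(\ZT)$ a routine telescoping estimate shows the limit is $U_k$-invariant for every $k$: writing $A_\s(f)$ for the average in \e{a1}, the difference between $A_\s(f)$ shifted by $U_k$ and $A_\s(f)$ itself is $s_k^{-1}$ times a difference of $(n-1)$-fold averages of $|f|$, hence $O(\|f\|_\infty/s_k)$. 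Being $U_1$-invariant, the limit is a.e. constant, and since every $A_\s$ preserves the integral, dominated convergence forces the constant to be $\int_\ZT f$. For general $f\in L\log^{d-1}L(\ZT)$ one writes $f=g+(f-g)$ with $g\in L^\infty$ and $\int_\ZT\Log_{d-1}(|f-g|/\varepsilon)<\varepsilon$, and uses the weak inequality \e{a22} of \trm{T2} to control $\sup_\s|A_\s(f-g)|$ off a set of measure $O(\varepsilon)$; together with $\int_\ZT|f-g|<\varepsilon^2$ (since $\Log_{d-1}(t)\ge t$), letting $\varepsilon\to0$ shows the a.e. limit for $f$ is again $\int_\ZT f$. (Equivalently, one may evaluate the limit directly on trigonometric polynomials, where $s_1^{-1}\sum_{k=0}^{s_1-1}e^{2\pi imk\theta_1}\to 0$ for $m\ne 0$, and finish by density of trigonometric polynomials in the Orlicz space $L\log^{d-1}L(\ZT)$.)

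For part~2) I would relabel so that $\{\theta_1,\dots,\theta_d\}$ is an independent subset of cardinality $d=\rank(\Theta)$. For any non-trivial integer vector $(p_1,\dots,p_d)$ the map $U_1^{p_1}\circ\cdots\circ U_d^{p_d}$ is a rotation by $p_1\theta_1+\cdots+p_d\theta_d\not\equiv 0\pmod 1$, and a nonzero rotation of $\ZT$ has no fixed point at all; hence $\{x:\,U_1^{p_1}\circ\cdots\circ U_d^{p_d}(x)=x\}=\varnothing$, so $\{U_1,\dots,U_d\}$ is a non-periodic family. \trm{OT2} then produces, for each increasing $\Phi$ with $\Phi(t)=o(t(\log t)^{d-1})$, a function $f\in L_\Phi(\ZT)$ whose $d$-fold averages \e{a69} diverge unboundedly a.e.; the Lemma proved above in this section upgrades this to a.e. unbounded divergence of the full $n$-fold averages in \e{a1}.

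Most of the argument is bookkeeping: matching the two notions of rank, and observing that an independent family of rotations is automatically non-periodic because a nontrivial circle rotation is fixed-point-free. The only genuinely analytic ingredient is the identification of the limit in part~1), and within that the one step requiring care is the passage from bounded $f$ to general $f\in L\log^{d-1}L(\ZT)$, which is carried out exactly as in the proof of \trm{T3} via the maximal inequality \e{a22}.
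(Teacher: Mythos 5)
Your proposal is correct and follows essentially the same route the paper intends: the paper gives only the one-line indication ``applying \trm{T1} and the ergodicity of the rotation $x\to x+\theta$ for irrational $\theta$,'' together with \trm{OT2} and the preceding lemma for sharpness, and your write-up supplies exactly the details behind that sentence (the dictionary between arithmetic independence of $\Theta$ and independence of the rotations, the identification of the limit via invariance plus ergodicity, and the observation that an independent family of rotations is automatically non-periodic). No gaps.
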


\begin{bibdiv}
\begin{biblist}
\bib{Dun}{article}{
	author={Dunford, Nelson},
	title={An individual ergodic theorem for non-commutative transformations},
	journal={Acta Sci. Math. (Szeged)},
	volume={14},
	date={1951},
	pages={1--4},
	issn={0001-6969},
	review={\MR{42074}},
}
\bib{DuSw}{book}{
	author={Dunford, Nelson},
	author={Schwartz, Jacob T.},
	title={Linear operators. Part III},
	series={Wiley Classics Library},
	note={Spectral operators;
		With the assistance of William G. Bade and Robert G. Bartle;
		Reprint of the 1971 original;
		A Wiley-Interscience Publication},
	publisher={John Wiley \& Sons, Inc., New York},
	date={1988},
	pages={i--xx and 1925--2592},
	isbn={0-471-60846-7},
	review={\MR{1009164}},
}
\bib{EiWa}{book}{
	author={Einsiedler, Manfred},
	author={Ward, Thomas},
	title={Ergodic theory with a view towards number theory},
	series={Graduate Texts in Mathematics},
	volume={259},
	publisher={Springer-Verlag London, Ltd., London},
	date={2011},
	pages={xviii+481},
	isbn={978-0-85729-020-5},
	review={\MR{2723325}},
	doi={10.1007/978-0-85729-021-2},
}
\bib{Fava}{article}{
	author={Fava, Norberto Angel},
	title={Weak type inequalities for product operators},
	journal={Studia Math.},
	volume={42},
	date={1972},
	pages={271--288},
	issn={0039-3223},
	review={\MR{308364}},
	doi={10.4064/sm-42-3-271-288},
}

\bib{Guz1}{article}{
	author={de Guzm\'{a}n, Miguel},
	title={An inequality for the Hardy-Littlewood maximal operator with
		respect to a product of differentiation bases},
	journal={Studia Math.},
	volume={49},
	date={1973/74},
	pages={185--194},
	issn={0039-3223},
	review={\MR{333093}},
	doi={10.4064/sm-49-2-185-194},
}
\bib{Guz2}{book}{
	author={de Guzm\'{a}n, Miguel},
	title={Differentiation of integrals in $R^{n}$},
	series={Lecture Notes in Mathematics, Vol. 481},
	note={With appendices by Antonio C\'{o}rdoba, and Robert Fefferman, and two
		by Roberto Moriy\'{o}n},
	publisher={Springer-Verlag, Berlin-New York},
	date={1975},
	pages={xii+266},
	review={\MR{0457661}},
}
\bib{GuWe}{article}{
	author={de Guzm\'{a}n, Miguel},
	author={Welland, Grant V.},
	title={On the differentiation of integrals},
	journal={Rev. Un. Mat. Argentina},
	volume={25},
	date={1970/71},
	pages={253--276},
	issn={0041-6932},
	review={\MR{318418}},
}
\bib{JMZ}{article}{
	author={Jessen, B.},
	author={Marcinkiewicz, J.}
	author={Zygmund, A.},
	title={Note on the differentiability of multiple integrals},
	journal={Fund. Math.},
	volume={25},
	date={1935},
	pages={235--252},
	issn={0016-2736},
	doi={10.4064/fm-25-1-217-234},
}
\bib{Sto}{article}{
	author={Hagelstein, Paul},
	author={Stokolos, Alexander},
	title={Weak type inequalities for ergodic strong maximal operators},
	journal={Acta Sci. Math. (Szeged)},
	volume={76},
	date={2010},
	number={3-4},
	pages={427--441},
	issn={0001-6969},
	review={\MR{2789679}},
}
\bib{Wie}{article}{
	author={Wiener, Norbert},
	title={The ergodic theorem},
	journal={Duke Math. J.},
	volume={5},
	date={1939},
	number={1},
	pages={1--18},
	issn={0012-7094},
	review={\MR{1546100}},
	doi={10.1215/S0012-7094-39-00501-6},
}

\bib{Zyg}{article}{
	author={Zygmund, A.},
	title={An individual ergodic theorem for non-commutative transformations},
	journal={Acta Sci. Math. (Szeged)},
	volume={14},
	date={1951},
	pages={103--110},
	issn={0001-6969},
	review={\MR{45948}},
}
\end{biblist}
\end{bibdiv}

\end{document}